\newtheorem{theorem}{Theorem}    
\newtheorem{proposition}[theorem]{Proposition}
\newtheorem{corollary}[theorem]{Corollary}
\newtheorem{lemma}[theorem]{Lemma}
\newtheorem{remark}[theorem]{Remark}
\newtheorem{definition}[theorem]{Definition}
\theoremstyle{definition}
\numberwithin{theorem}{section} \numberwithin{theorem}{section}
\numberwithin{equation}{section}
\def\Rn{{\mathbb{R}^n}}
\begin{document}
\title[Bump conditions and two-weight inequalities for commutators]
{Bump conditions and two-weight inequalities for commutators of fractional integrals}

\author{Yongming Wen and Huoxiong Wu$^*$}

\subjclass[2010]{
42B20; 42B25; 42B35; 47B47.
}

%
\keywords{commutators, fractional integrals, two weight inequalities, bump conditions, weighted $BMO(\Rn)$ spaces.}
\thanks{$^*$Corresponding author.}
\thanks{Supported by the NNSF of China (Nos. 11771358, 11871101, 11871254), the President's fund of Minnan Normal University (No. KJ2020020), the scientific research project of The Education Department of Fujian Province (No. JAT200331) and Fujian Key Laboratory of Granular Computing and Applications (Minnan Normal University), China.}
\address{School of Mathematics and Statistics, Minnan Normal University, Zhangzhou 363000,  China} \email{wenyongmingxmu@163.com}
\address{School of Mathematical Sciences, Xiamen University, Xiamen 361005, China} \email{huoxwu@xmu.edu.cn}



\begin{abstract}
This paper gives new two-weight bump conditions for the sparse operators related to iterated commutators of fractional integrals. As applications, the two-weight bounds for iterated commutators of fractional integrals under more general bump conditions are obtained. Meanwhile, the necessity of two-weight bump conditions as well as the converse of Bloom type estimates for iterated commutators of fractional integrals are also given.
\end{abstract}

\maketitle

\section{Introduction and main results}
Let $0<\alpha<n$, $m\in \mathbb{Z}^+$ and $b\in L_{loc}^{1}(\mathbb{R}^n)$. The fractional integral operator $I_\alpha$ and its higher order commutator $I_{\alpha}^{b,m}$ are defined by
\begin{align*}
I_{\alpha}f(x)=\int_{\mathbb{R}^n}\frac{f(y)}{|x-y|^{n-\alpha}}dy,\quad
I_{\alpha}^{b,m}f(x)=\int_{\mathbb{R}^n}(b(x)-b(y))^m\frac{f(y)}{|x-y|^{n-\alpha}}dy.
\end{align*}
In this paper, we consider two weight estimates for $I_{\alpha}^{b,m}$
\begin{align*}
\Big(\int_{\mathbb{R}^n}|I_{\alpha}^{b,m}f(x)|^q\mu(x)dx\Big)^{1/q}\leq C\Big(\int_{\mathbb{R}^n}|f(x)|^p\nu(x)dx\Big)^{1/p},
\end{align*}
where $(\mu,\nu)$ is a pair of weights. Before this, we recall some backgrounds.

Let $1<p<n/\alpha$ and $1/p-1/q=\alpha/n$. It is well known that $I_\alpha$ is bounded from $L^p(\mathbb{R}^n)$ to $L^q(\mathbb{R}^n)$. Given a function $b\in L_{loc}^{1}(\mathbb{R}^n)$, we say that $b\in BMO(\mathbb{R}^n)$ if
\begin{align*}
\|b\|_{BMO(\mathbb{R}^n)}=\sup_Q\frac{1}{|Q|}\int_Q|b(x)-b_Q|dx<\infty,
\end{align*}
where $b_Q=|Q|^{-1}\int_Qb(x)dx$. In 1982, Chanillo \cite{Chan} proved that if $1<p<n/\alpha$, $1/p-1/q=\alpha/n$ and $b\in BMO(\mathbb{R}^n)$, then $I_{\alpha}^{b,1}$ is bounded from $L^p(\mathbb{R}^n)$ to $L^q(\mathbb{R}^n)$. By a weight $\omega$, we mean a nonnegative locally integrable function on $\mathbb{R}^n$. We say that $\omega\in A_{p,q}$ if
\begin{align*}
[\omega]_{A_{p,q}}=\sup_{Q}\Big(\frac{1}{|Q}\int_Q\omega(x)^qdx\Big)\Big(\frac{1}{|Q|}\int_Q
\omega(x)^{-p'}\Big)^{q/p'}<\infty,~1<p<q<\infty.
\end{align*}
Muckenhoupt and Wheeden \cite{MuW} proved that $I_\alpha$ is bounded from $L^p(\omega^p)$ to $L^q(\omega^q)$, where $0<\alpha<n$, $1<p<n/\alpha$, $1/p-1/q=\alpha/n$ and $\omega\in A_{p,q}$. Under the same conditions as \cite{MuW} with $b\in BMO(\mathbb{R}^n)$, Segovia and Torrea \cite{ST} obtained the weighted $L^p\rightarrow L^q$ boundedness for commutators of fractional integral operators.

Though the forms of two weight inequalities for singular integral operators and related operators are the generalization of one weight inequalities, two weight estimates for operators are more difficult. For instance, it is well known that the $A_p$ condition
\begin{align*}
\sup_Q\Big(\frac{1}{|Q|}\int_Q\omega(x)dx\Big)\Big(\frac{1}{|Q|}\int_Q\omega(x)^{1-p'}dx\Big)^{p-1}
<\infty
\end{align*}
is the sufficient condition for singular integral operators and related operators to be bounded on $L^p(\omega)$. However, in general, the $A_p$ condition for a pair of weights $(\mu,\nu)$
\begin{align}\label{two weight Ap}
\sup_Q\Big(\frac{1}{|Q|}\int_Q\mu(x)dx\Big)\Big(\frac{1}{|Q|}\int_Q\nu(x)^{1-p'}dx\Big)^{p-1}
<\infty
\end{align}
is necessary but never sufficient for operators to be bounded from $L^p(\nu)$ to $L^p(\mu)$, see \cite{CruMP}. To solve this problem, Sawyer \cite{Sawyer} first introduced the following test condition: there is a positive constant $C$ such that for any cubes $Q$
\begin{align*}
\int_QM(\nu^{1-p'}\chi_Q)(x)^p\mu(x)dx\leq C\int_Q\nu(x)^{1-p'}dx<\infty,
\end{align*}
where $M$ is the Hardy-Littlewood maximal operator, he proved that this test condition is necessary and sufficient for $M$ to be bounded from $L^p(\nu)$ to $L^p(\mu)$. However, this condition is very difficult to verify due to the operator $M$ involves in it. This drawback appeals researchers to searching for some simpler sufficient conditions, which are close to \eqref{two weight Ap} in some sense. Neugebaur \cite{Neu} first proved that for some $r>1$, if a pair of weights
$(\mu,\nu)$ satisfies the following power bump condition:
\begin{align*}
\sup_Q\Big(\frac{1}{|Q|}\int_Q\mu(x)^rdx\Big)^{1/r}\Big(\frac{1}{|Q|}\int_Q\nu(x)^{r(1-p')}dx\Big)
^{(p-1)/r}<\infty,
\end{align*}
then
\begin{align*}
\int_{\mathbb{R}^n}(Mf(x))^p\mu(x)dx\leq C\int_{\mathbb{R}^n}|f(x)|^p\nu(x)dx.
\end{align*}

To formulate the following works of seeking for appropriate bump conditions which are sufficient for the two weight inequalities of singular integral operators and related operators. We recall some facts about Orlicz spaces. We say $A(t):[0,\infty)\rightarrow[0,\infty)$ is a Young function if it is increasing, convex, $A(0)=0$ and $A(t)/t\rightarrow\infty$ as $t\rightarrow\infty$. Given a Young function $A$, the associated complementary function $\bar{A}$ is defined by
\begin{align*}
\bar{A}(t)=\sup_{s>0}\{st-A(s)\}.
\end{align*}
Let $1<p<\infty$ and $A$ be a Young function, we say that $A\in B_p$ if
\begin{align*}
\int_{1}^{\infty}\frac{A(t)}{t^p}\frac{dt}{t}<\infty.
\end{align*}
Given a Young function $A$, the Orlicz average on a cube $Q$ of a function $f$ is defined by
\begin{align*}
\|f\|_{A,Q}=\inf\Big\{\lambda>0:\frac{1}{|Q|}\int_QA\Big(\frac{|f(x)|}{\lambda}\Big)dx\leq1\Big\}.
\end{align*}

In 1995, P\'{e}rez \cite{Perez3} improved Neugebaur's result by eliminating the power bump on the left-hand weight $\mu$ and replacing the right-hand weight $\nu$ by the ``Orlicz bump''. Precisely, he proved that if a pair of weights $(\mu,\nu)$ satisfies
\begin{align*}
\sup_Q\|\mu^{1/p}\|_{p,Q}\|\nu^{-1/p}\|_{\Phi,Q}<\infty,~1<p<\infty,
\end{align*}
and $\bar{\Phi}\in B_p$, then $M:L^p(\nu)\rightarrow L^p(\mu)$. While for Calder\'{o}n-Zygmund operator $T$, Cruz-Uribe and P\'{e}rez \cite{CruPe} conjectured that if both terms in \eqref{two weight Ap} were bumped, then $T:L^p(\nu)\rightarrow L^p(\mu)$. This conjecture was partially solved in \cite{NaReTV} and completely solved by Lerner in \cite{Lerner}. Lerner proved that if a pair of weights $(\mu,\nu)$ satisfies
\begin{align}\label{bump conjcture}
\sup_Q\|\mu^{1/p}\|_{\Psi,Q}\|\nu^{-1/p}\|_{\Phi,Q}<\infty,~1<p<\infty,
\end{align}
and $\bar{\Phi}\in B_p, \bar{\Psi}\in B_{p'}$, then $T:L^p(\nu)\rightarrow L^p(\mu)$. The separated bump conjecture, which aries from the work of Cruz-Uribe et al. \cite{CruPe0}, who asserted that $T:L^p(\nu)\rightarrow L^p(\mu)$ provided that \eqref{bump conjcture} is replaced by
\begin{align*}
\sup_Q\|\mu^{1/p}\|_{p,Q}\|\nu^{-1/p}\|_{\Phi,Q}<\infty\quad and\quad \|\mu^{1/p}\|_{\Psi,Q}\|\nu^{-1/p}\|_{p',Q}<\infty.
\end{align*}
In \cite{CruRV}, Cruz-Uribe et al. only proved this conjecture is true for $\Phi(t)=t^{p'}[\log(e+t)]^{p'-1+\delta}$ and $\Psi(t)=t^p[\log(e+t)]^{p-1+\delta}$ for some $\delta>0$. This conjecture is still open, and we refer readers to see \cite{Lacey,LerOR,Li} for more recent works about it. Analogue to the case of singular integral operators, P\'{e}rez \cite{Perez2} gave the following sufficient condition:
\begin{align*}
\sup_Q|Q|^{\alpha/n+1/q-1/p}\|\mu^{1/q}\|_{A,Q}\|\nu^{-1/p}\|_{B,Q}<\infty,~\bar{A}\in B_p, \bar{B}\in B_{q'},
\end{align*}
such that $I_\alpha:L^p(\nu)\rightarrow L^q(\mu)$. The conditions $\bar{A}\in B_p, \bar{B}\in B_{q'}$ were improved to $\bar{A}\in B_{p,q}, \bar{B}\in B_{q',p'}$ in \cite{CruzM}. Here, we say that $A\in B_{p,q}$ if
$$\int_{1}^{\infty}\frac{A(t)^{q/p}}{t^q}\frac{dt}{t}<\infty.$$
Recently, Rahm \cite{R} used ``entropy bumps'' and ``direct comparison bumps'' to get the two weight boundedness for fractional sparse operators.

On the other hand, Cruz-Uribe and Moen \cite{CruzM0} showed that if $b\in BMO(\mathbb{R}^n)$ and a pair of weights $(\mu,\nu)$ satisfies
\begin{align*}
\sup_Q\|\mu^{1/p}\|_{L^p(\log L)^{2p-1+\delta},Q}\|\nu^{-1/p}\|_{L^{p'}(\log L)^{2p'-1+\delta},Q}<\infty,
\end{align*}
then the commutator of Calder\'{o}n-Zygmund operator $T_b$ is bounded from $L^p(\nu)$ to $L^p(\mu)$. This result was recently improved by Lerner et al. \cite{LerOR}, who provided a wider class of weights $(\mu,\nu)$:
\begin{align*}
\sup_Q\|\mu^{1/p}\|_{L^p(\log L)^{(m+1)p-1+\delta}}\|\nu^{-1/p}\|_{B,Q}+\sup_Q
\|\mu^{1/p}\|_{A,Q}\|\nu^{-1/p}\|_{L^p(\log L)^{(m+1)p-1+\delta},Q}<\infty,
\end{align*}
for which $\|T_b^m\|_{L^p(\nu)\rightarrow L^p(\mu)}<\infty$, where $b\in BMO(\mathbb{R}^n)$ and $\bar{A}\in B_{p'},\bar{B}\in B_p$. Very recently, Cruz-Uribe et al. \cite{CruMT} generalized the work in \cite{LerOR} by assuming the Young functions $\bar{A},\bar{C}\in B_{p'}$, $\bar{B},\bar{D}\in B_p$ and $(\mu,\nu)$ satisfies
\begin{align*}
\sup_Q\|\mu^{1/p}\|_{A,Q}\|(b-b_Q)^m\nu^{-1/p}\|_{B,Q}+
\sup_Q\|(b-b_Q)^m\mu^{1/p}\|_{C,Q}\|\nu^{-1/p}\|_{B,Q}<\infty.
\end{align*}
We also refer readers to see \cite{IsPT} for the result in the matrix setting. For the commutators of fractional integral operators, Cruz-Uribe \cite{Cru} showed that if a pair of weights $(\mu,\nu)$ satisfies
\begin{align}\label{improve this condition}
&\sup_Q|Q|^{\alpha/n+1/q-1/p}\|\mu^{1/q}\|_{A,Q}\|\nu^{-1/p}\|_{B,Q}<\infty,
\end{align}
with $A(t)=t^q(\log(e+t))^{2q-1+\delta}$, $B(t)=t^{p'}(\log(e+t))^{2p'-1+\delta}$, then $I_\alpha^{b,1}$ is bounded from $L^p(\nu)$ to $L^p(\mu)$. Recently, Cardenas and Isralowitz \cite{CaIs} established two weight inequality for $I_{\alpha}^{b,1}$ in the matrix setting.

Inspired by the works in \cite{CaIs,CruMT,LerOR}, in this paper, we mainly consider two weight inequalities for $I_{\alpha}^{b,m}$. Our first main result can be formulated as follows.
\begin{theorem}\label{theorem1.1}
Let $1<p\leq q<\infty$, $0<\alpha<n$, $m\in\mathbb{Z}^+$, $b\in L_{loc}^{m}(\mathbb{R}^n)$ and $\mathcal{S}$ be a sparse family.
\begin{itemize}
\item [(1)]Suppose that $A,B,C,D$ are Young functions which satisfy $\bar{A},\bar{C}\in B_{q'}$ and $\bar{B},\bar{D}\in B_{p,q}$. If a pair of weights $(\mu,\nu)$ satisfies
\begin{align*}
&\sup_{Q\in\mathcal{S}}|Q|^{\alpha/n+1/q-1/p}\|\mu^{1/q}
\|_{A,Q}\|(b-b_Q)^m\nu^{-1/p}\|_{B,Q}\\
&\qquad+\sup_{Q\in\mathcal{S}}|Q|^{\alpha/n+1/q-1/p}
\|(b-b_Q)^m\mu^{1/q}\|_{C,Q}\|\nu^{-1/p}\|_{D,Q}<\infty,
\end{align*}
then
\begin{align}\label{new}
\|T_{\alpha}^{\mathcal{S},b,m}f\|_{L^q(\mu)}
+\|(T_{\alpha}^{\mathcal{S},b,m})^{\ast}f\|_{L^q(\mu)}\lesssim \|f\|_{L^p(\nu)}.
\end{align}
\item [(2)]Conversely, if \eqref{new} holds, then
\begin{align*}
&\sup_{Q\in\mathcal{S}}|Q|^{{\alpha}/{n}+{1}/{q}-{1}/{p}}
\|\mu^{1/q}\|_{q,Q}\|(b-b_Q)^m\nu^{-1/p}\|_{p',Q}\\
&\qquad+\sup_{Q\in\mathcal{S}}|Q|^{{\alpha}/{n}+{1}/{q}-{1}/{p}}
\|(b-b_Q)^m\mu^{1/q}\|_{q,Q}\|\nu^{-1/p}\|_{p',Q}<\infty.
\end{align*}
\end{itemize}
Here
\begin{align*}
T_{\mathcal{S},\alpha}^{b,m}f(x)=\sup_{Q\in\mathcal{S}}|Q|^{\alpha/n}\Big(|Q|^{-1}\int_Q
|b(x)-b_Q|^m|f(x)|dx\Big)\chi_Q(x),
\end{align*}
and
\begin{align*}
(T_{\mathcal{S},\alpha}^{b,m})^{\ast}f(x)=\sup_{Q\in\mathcal{S}}|Q|^{\alpha/n}|b(x)-b_Q|^m
\Big(|Q|^{-1}\int_Q|f(x)|dx\Big)\chi_Q(x).
\end{align*}
\end{theorem}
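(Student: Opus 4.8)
The plan is to treat the two parts of Theorem~\ref{theorem1.1} separately, since they use different machinery: part (1) is an upper bound for sparse forms via a duality/Orlicz–H\"older argument, and part (2) is a necessity statement proved by testing the inequality on well-chosen functions.

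For part (1), I would first linearize and dualize. By duality for $L^q(\mu)$ it suffices to control the bilinear form $\sum_{Q\in\mathcal S}|Q|^{\alpha/n}\langle |b-b_Q|^m f\rangle_Q \int_Q g\,d\mu$ (and its adjoint analogue, where the factor $|b-b_Q|^m$ sits on the $g$-side), uniformly over $g$ with $\|g\|_{L^{q'}(\mu^{1-q'})}\le 1$ after the standard change of variables $g\mapsto g\mu$, $f\mapsto f\nu^{-1/p}$ etc. On each cube in the sparse family one then applies the generalized H\"older inequality in Orlicz spaces, $\langle FG\rangle_Q\le 2\|F\|_{A,Q}\|G\|_{\bar A,Q}$, to peel off $\|\mu^{1/q}\|_{A,Q}$ against $\|(b-b_Q)^m\nu^{-1/p}\cdot(\text{stuff})\|$, invoking the bump hypothesis to replace the geometric factors $|Q|^{\alpha/n+1/q-1/p}$ by the (finite) supremum constant. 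What remains after pulling out the bump constant is a sum over the sparse family of products of local averages of the form $\|f\nu^{1/p}\cdot\nu^{-1/p}\|_{?,Q}$-type quantities; here the hypotheses $\bar B,\bar D\in B_{p,q}$ are exactly what is needed so that the corresponding Orlicz maximal operator $M_{\bar B}$ is bounded $L^p\to L^q$ (this is the fractional analogue of the $B_p$ condition giving $L^p$-boundedness of $M_A$, due to P\'erez), and the remaining $\bar A,\bar C\in B_{q'}$ give $L^{q'}$-boundedness of the dual maximal operators. Summing over the sparse family is handled by the usual sparseness: the $E_Q\subset Q$ with $|E_Q|\ge\tfrac12|Q|$ are disjoint, so the sparse sum is dominated by $\int (M_{\bar B}f_1)(M_{\bar A}f_2)$ or an $\ell^{q'}(\ell^p)$-type estimate, and then H\"older in the outer integral closes the bound by $\|f\|_{L^p(\nu)}$. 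Exactly the same scheme, with the roles of $A\leftrightarrow C$ and $B\leftrightarrow D$ and $f\leftrightarrow g$ interchanged, handles $(T^{\mathcal S,b,m}_\alpha)^\ast$. The main obstacle is organizing the Orlicz-H\"older splitting so that, on one side the weight bump is isolated and on the other side one is left with a quantity whose $B_{p,q}$ (resp.\ $B_{q'}$) maximal function is bounded; because the commutator weight $|b-b_Q|^m$ can be distributed to either factor, one must run two separate splittings and it is the bookkeeping of which Young function goes where that carries the argument.

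For part (2), I would argue by testing. Fix $Q\in\mathcal S$. Choosing $f=\nu^{-p'/p}\chi_Q$ (equivalently $f=\nu^{1-p'}\chi_Q$) and restricting the left side of \eqref{new} to the single term indexed by $Q$ gives
\begin{align*}
\|T^{\mathcal S,b,m}_\alpha f\|_{L^q(\mu)}\ge |Q|^{\alpha/n}\Big(|Q|^{-1}\int_Q|b-b_Q|^m\nu^{1-p'}\Big)\mu(Q)^{1/q},
\end{align*}
while the right side is $\big(\int_Q\nu^{1-p'}\big)^{1/p}$; rearranging and using $\int_Q\nu^{1-p'}=|Q|\,\|\nu^{-1/p}\|_{p',Q}^{p'}$ and $\mu(Q)^{1/q}=|Q|^{1/q}\|\mu^{1/q}\|_{q,Q}$, together with the algebraic identity $\alpha/n+1/q-1/p = \alpha/n+1/q-1/p$ after absorbing the extra powers of $|Q|$, yields the first supremum in part (2) with constant bounded by the operator norm in \eqref{new}. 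The second supremum comes from the adjoint term: testing $(T^{\mathcal S,b,m}_\alpha)^\ast$ against the same $f=\nu^{1-p'}\chi_Q$ moves the $|b-b_Q|^m$ factor onto the $\mu$-integration, producing $|Q|^{\alpha/n}\langle\nu^{1-p'}\rangle_Q\big(\int_Q|b-b_Q|^{mq}\mu\big)^{1/q}$ on the left, which after the same normalization gives $|Q|^{\alpha/n+1/q-1/p}\|(b-b_Q)^m\mu^{1/q}\|_{q,Q}\|\nu^{-1/p}\|_{p',Q}$. The only subtlety here, and the one point that needs care, is the usual one for testing arguments with rough weights: $\nu^{1-p'}$ may fail to be locally integrable or the relevant integrals may be infinite, so one first proves the estimate for the truncations $\nu_N=\min(\nu,N)\max(\nu,1/N)$ or restricts to cubes on which everything is finite, derives the bound with a constant independent of $N$, and then lets $N\to\infty$ by monotone convergence. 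I expect this truncation step, rather than the main inequality, to be the only place requiring genuine (if routine) care in part (2).
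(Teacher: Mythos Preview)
Your outline for part (1) matches the paper's argument: dualize, apply the generalized Orlicz--H\"older inequality on each cube to isolate the bump factor, use sparseness to pass to an integral of maximal functions, and close with the bounds $M_{\bar A}:L^{q'}\to L^{q'}$ (from $\bar A\in B_{q'}$) and $M_{\beta,\bar B}:L^p\to L^q$ (from $\bar B\in B_{p,q}$, with $\beta/n=1/p-1/q$). The only imprecision is that the maximal function on the $f$-side is the \emph{fractional} Orlicz maximal $M_{\beta,\bar B}$, not $M_{\bar B}$, but your parenthetical remark shows you have this in mind.

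In part (2), however, your test function is wrong for the first supremum. With $f=\nu^{1-p'}\chi_Q$ you get
\[
|Q|^{\alpha/n}\langle |b-b_Q|^m\,\nu^{1-p'}\rangle_Q\,\mu(Q)^{1/q}\ \lesssim\ \Big(\int_Q\nu^{1-p'}\Big)^{1/p},
\]
which after normalization bounds only
\[
|Q|^{\alpha/n+1/q-1/p}\,\|\mu^{1/q}\|_{q,Q}\,\frac{\langle |b-b_Q|^m\,\nu^{1-p'}\rangle_Q}{\langle \nu^{1-p'}\rangle_Q^{1/p}}.
\]
By H\"older (with respect to the measure $\nu^{1-p'}\,dx$) the last fraction is \emph{dominated by}, not equal to, $\|(b-b_Q)^m\nu^{-1/p}\|_{p',Q}=\langle|b-b_Q|^{mp'}\nu^{1-p'}\rangle_Q^{1/p'}$; so your inequality is strictly weaker than the one claimed and does not yield the first supremum. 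The paper fixes this by testing with
\[
f=|b-b_Q|^{m(p'-1)}\,\nu^{1-p'}\chi_Q,
\]
which is exactly the extremizer of the H\"older step above: then $|b-b_Q|^m f=|b-b_Q|^{mp'}\nu^{1-p'}$ and $|f|^p\nu=|b-b_Q|^{mp'}\nu^{1-p'}$ as well, so the same integral appears on both sides with powers $1$ and $1/p$, and the quotient produces precisely $\|(b-b_Q)^m\nu^{-1/p}\|_{p',Q}$. Your argument for the second supremum, testing the adjoint with $f=\nu^{1-p'}\chi_Q$, is fine, because there the factor $|b-b_Q|^m$ already sits outside the average and lands inside the $L^q(\mu)$ norm without any loss.
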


As an application, we can obtain the following two weight bump conditions for iterated commutators $I_\alpha^{b, m}$.

\begin{theorem}\label{theorem1.2}
Let $1<p\leq q<\infty$, $0<\alpha<n$, $m\in\mathbb{Z}^+$, $b\in L_{loc}^{m}(\mathbb{R}^n)$ and $I_\alpha^{b,m}$ be commutators of fractional integral operators. Suppose that $A,B,C,D$ are Young functions which satisfy $\bar{A},\bar{C}\in B_{q'}$ and $\bar{B},\bar{D}\in B_{p,q}$. If a pair of weights $(\mu,\nu)$ satisfy
\begin{align*}
&\sup_{Q\in\mathcal{S}}|Q|^{\alpha/n+1/q-1/p}\|\mu^{1/q}
\|_{A,Q}\|(b-b_Q)^m\nu^{-1/p}\|_{B,Q}\\
&\qquad+\sup_{Q\in\mathcal{S}}|Q|^{\alpha/n+1/q-1/p}
\|(b-b_Q)^m\mu^{1/q}\|_{C,Q}\|\nu^{-1/p}\|_{D,Q}<\infty,
\end{align*}
then $\|I_{\alpha}^{b,m}f\|_{L^q(\mu)}\lesssim\|f\|_{L^p(\nu)}$.
\end{theorem}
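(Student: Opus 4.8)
The plan is to deduce Theorem~\ref{theorem1.2} from Theorem~\ref{theorem1.1} by means of a pointwise sparse domination of $I_\alpha^{b,m}$, after which an interpolation argument absorbs the ``mixed'' sparse forms into the two hypothesized bump conditions. First I would record the sparse domination (obtained as for commutators of Calder\'{o}n--Zygmund operators, with the factor $|Q|^{\alpha/n}$ carried through): there are $3^n$ dyadic grids and, for each, a sparse family $\mathcal{S}_j$ with
\begin{align*}
|I_\alpha^{b,m}f(x)|\lesssim\sum_{j=1}^{3^n}\sum_{k=0}^{m}\binom{m}{k}\,\mathcal{A}^{(k)}_{\mathcal{S}_j}f(x),
\end{align*}
where $\mathcal{A}^{(k)}_{\mathcal{S}}f(x):=\sum_{Q\in\mathcal{S}}|b(x)-b_Q|^{k}\,|Q|^{\alpha/n-1}\big(\int_Q|b-b_Q|^{m-k}|f|\big)\chi_Q(x)$; its source is the identity $(b(x)-b(y))^m=\sum_{k=0}^m\binom{m}{k}(b(x)-b_Q)^k(b_Q-b(y))^{m-k}$ used on each stopping cube.

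The extreme values $k=0$ and $k=m$ are precisely the operators of Theorem~\ref{theorem1.1}: $\mathcal{A}^{(0)}_{\mathcal{S}}=T_{\mathcal{S},\alpha}^{b,m}$ and $\mathcal{A}^{(m)}_{\mathcal{S}}=(T_{\mathcal{S},\alpha}^{b,m})^{\ast}$ (in summation form, which is what its proof controls). Hence Theorem~\ref{theorem1.1}(1), applied to each of the finitely many families $\mathcal{S}_j$, gives $\|\mathcal{A}^{(0)}_{\mathcal{S}_j}f\|_{L^q(\mu)}+\|\mathcal{A}^{(m)}_{\mathcal{S}_j}f\|_{L^q(\mu)}\lesssim\|f\|_{L^p(\nu)}$.

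The crux is the mixed forms $\mathcal{A}^{(k)}_{\mathcal{S}}$ for $0<k<m$. I would dualize: using $\|h\|_{L^q(\mu)}=\sup\{\int h\,\mu^{1/q}G:\|G\|_{L^{q'}}\le1,\ G\ge0\}$ and sparseness ($|Q|\lesssim|E_Q|$, the $E_Q\subset Q$ disjoint), the desired bound reduces to estimating, uniformly in such $G$,
\begin{align*}
\sum_{Q\in\mathcal{S}}|Q|^{1+\alpha/n}\Big(\frac{1}{|Q|}\int_Q|b-b_Q|^k\mu^{1/q}G\Big)\Big(\frac{1}{|Q|}\int_Q|b-b_Q|^{m-k}|f|\Big).
\end{align*}
Splitting each average by the generalized H\"older inequality in Orlicz spaces --- extracting $\|(b-b_Q)^k\mu^{1/q}\|_{A_k,Q}$ and $\|(b-b_Q)^{m-k}\nu^{-1/p}\|_{B_k,Q}$ against $\|G\|_{\bar A_k,Q}$, $\|f\nu^{1/p}\|_{\bar B_k,Q}$ for Young functions $A_k,B_k$ with $\bar A_k\in B_{q'}$, $\bar B_k\in B_{p,q}$ --- the remaining sum is, exactly as in the proof of Theorem~\ref{theorem1.1} (via the $L^{q'}$-bound for $M_{\bar A_k}$ and the $L^p\to L^q$ bound for the relevant fractional Orlicz maximal operator), $\lesssim\|G\|_{L^{q'}}\|f\|_{L^p(\nu)}$ provided $(\mu,\nu)$ satisfies the intermediate bump $\sup_{Q\in\mathcal{S}}|Q|^{\alpha/n+1/q-1/p}\|(b-b_Q)^k\mu^{1/q}\|_{A_k,Q}\|(b-b_Q)^{m-k}\nu^{-1/p}\|_{B_k,Q}<\infty$. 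So everything reduces to extracting these intermediate conditions from the two hypotheses.

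For this, set $\theta:=(m-k)/m\in(0,1)$; raising the $(A,B)$ bump functional to the power $\theta$ and the $(C,D)$ one to the power $1-\theta$ and multiplying (the $|Q|$-exponents agree and recombine to $|Q|^{\alpha/n+1/q-1/p}$) bounds $|Q|^{\alpha/n+1/q-1/p}$ times
\begin{align*}
\|\mu^{1/q}\|_{A,Q}^{\theta}\,\|(b-b_Q)^m\mu^{1/q}\|_{C,Q}^{1-\theta}\cdot\|(b-b_Q)^m\nu^{-1/p}\|_{B,Q}^{\theta}\,\|\nu^{-1/p}\|_{D,Q}^{1-\theta}.
\end{align*}
Now apply the Luxemburg-norm H\"older inequality: for Young functions $U,V$ and $\sigma\in(0,1)$ there is a Young function $W$ with $W^{-1}(t)\approx U^{-1}(t)^{\sigma}V^{-1}(t)^{1-\sigma}$ and $\|h_0^{\sigma}h_1^{1-\sigma}\|_{W,Q}\lesssim\|h_0\|_{U,Q}^{\sigma}\|h_1\|_{V,Q}^{1-\sigma}$. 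With $(h_0,h_1,\sigma)=(\mu^{1/q},(b-b_Q)^m\mu^{1/q},\theta)$ (so $h_0^{\theta}h_1^{1-\theta}=(b-b_Q)^{k}\mu^{1/q}$, as $m(1-\theta)=k$) this defines $A_k:=W$ with $\|(b-b_Q)^{k}\mu^{1/q}\|_{A_k,Q}\lesssim\|\mu^{1/q}\|_{A,Q}^{\theta}\|(b-b_Q)^m\mu^{1/q}\|_{C,Q}^{1-\theta}$, and with $(h_0,h_1,\sigma)=((b-b_Q)^m\nu^{-1/p},\nu^{-1/p},\theta)$ it defines $B_k$ with the matching estimate for $\|(b-b_Q)^{m-k}\nu^{-1/p}\|_{B_k,Q}$; together these give the intermediate bump. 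Finally $\bar W^{-1}(t)\approx t/W^{-1}(t)$ yields $\bar A_k^{-1}\approx(\bar A^{-1})^{\theta}(\bar C^{-1})^{1-\theta}$ and $\bar B_k^{-1}\approx(\bar B^{-1})^{\theta}(\bar D^{-1})^{1-\theta}$, and since $B_{q'}$ and $B_{p,q}$ are stable under this interpolation of complementary functions, $\bar A,\bar C\in B_{q'}$ forces $\bar A_k\in B_{q'}$ and $\bar B,\bar D\in B_{p,q}$ forces $\bar B_k\in B_{p,q}$. Summing over $j$ and $k$ gives $\|I_\alpha^{b,m}f\|_{L^q(\mu)}\lesssim\|f\|_{L^p(\nu)}$. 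The main obstacle is this interpolation step --- constructing $A_k,B_k$ and verifying the interpolated complementary functions stay in $B_{q'}$ and $B_{p,q}$ --- which is exactly where the hypotheses on all four of $A,B,C,D$ are consumed; the surrounding sparse-domination/duality argument is routine.
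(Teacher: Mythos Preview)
Your overall strategy is sound, and the interpolation step you flag as the main obstacle can indeed be pushed through (for instance, the stability of $B_{q'}$ and $B_{p,q}$ under the Calder\'on--Lozanovskii construction follows from the pointwise bound $M_\Phi f\le (M_{\Phi_0}f)^\theta(M_{\Phi_1}f)^{1-\theta}$ together with H\"older and the maximal-function characterizations of those classes). But this is considerably harder than what the paper does.

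The paper avoids the intermediate forms $\mathcal{A}^{(k)}_{\mathcal S}$ for $0<k<m$ altogether by a one-line pointwise inequality (Lemma~2.4): for fixed $Q$, $x\in Q$, and $y\in Q$,
\[
\sum_{k=0}^m |b(x)-b_Q|^{m-k}|b(y)-b_Q|^{k}\le (m+1)\max\{|b(x)-b_Q|,|b(y)-b_Q|\}^{m}\le (m+1)\big(|b(x)-b_Q|^{m}+|b(y)-b_Q|^{m}\big).
\]
Integrating in $y$ against $|f|$ collapses the full sum over $k$ into $T_{\mathcal S,\alpha}^{b,m}f(x)+(T_{\mathcal S,\alpha}^{b,m})^{\ast}f(x)$, so after sparse domination only the two extreme operators survive and Theorem~\ref{theorem1.1}(1) finishes the proof immediately. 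No Young-function interpolation, no intermediate bump conditions, and the four hypotheses on $A,B,C,D$ are used exactly once each, directly on the two surviving operators.

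In short: your route works but manufactures and then verifies an infinite family of auxiliary bump conditions; the paper's route observes that the mixed sparse forms are already pointwise dominated by the two extreme ones, making the reduction to Theorem~\ref{theorem1.1} trivial.
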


Furthermore, as a consequence of Theorem \ref{theorem1.2}, we can obtain the more traditional bump conditions by assuming that the multiplier $b$ lies in an oscillation class related to $BMO(\mathbb{R}^n)$.

\begin{theorem}\label{theorem1.3}
Let $1<p\leq q<\infty$, $0<\alpha<n$, $m\in\mathbb{Z}^+$ and $I_\alpha^{b,m}$ be commutators of fractional integral operators. Assume that $A,B,C,D,X,Y$ are Young functions which satisfy $\bar{A},\bar{C}\in B_{q'}$, $\bar{B},\bar{D}\in B_{p,q}$ and $X,Y$ satisfy
\begin{align*}
X^{-1}(t)\lesssim\frac{B^{-1}(t)}{\Phi^{-1}(t)^m}~and~Y^{-1}(t)\lesssim
\frac{C^{-1}(t)}{\Phi^{-1}(t)^m}
\end{align*}
for large $t$. If $b\in Osc(\Phi)$ and a pair of weights $(\mu,\nu)$ satisfies
\begin{align*}
&\sup_{Q}|Q|^{\alpha/n+1/q-1/p}\|\mu^{1/q}
\|_{A,Q}\|\nu^{-1/p}\|_{X,Q}\\
&\qquad+\sup_{Q}|Q|^{\alpha/n+1/q-1/p}
\|\mu^{1/q}\|_{Y,Q}\|\nu^{-1/p}\|_{D,Q}<\infty,
\end{align*}
then $\|I_{\alpha}^{b,m}f\|_{L^q(\mu)}\lesssim\|b\|_{Osc(\Phi)}^m\|f\|_{L^p(\nu)}$, where $Osc(\Phi)$ is the space of functions $b\in L_{loc}^{1}(\mathbb{R}^n)$ with
$$\|b\|_{Osc(\Phi)}=\sup_Q\|b-b_Q\|_{\Phi,Q}<\infty.$$
\end{theorem}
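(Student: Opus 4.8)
The plan is to deduce Theorem~\ref{theorem1.3} from Theorem~\ref{theorem1.2} by verifying that the hypotheses of the latter are implied by those of the former once we know $b\in Osc(\Phi)$. First I would recall that the two displayed conditions in Theorem~\ref{theorem1.2} involve the Orlicz averages $\|(b-b_Q)^m\nu^{-1/p}\|_{B,Q}$ and $\|(b-b_Q)^m\mu^{1/q}\|_{C,Q}$, so the key point is to split off the factor $(b-b_Q)^m$ using a generalized H\"older inequality in Orlicz spaces: if $X^{-1}(t)\Phi_m^{-1}(t)\lesssim B^{-1}(t)$ for large $t$, where $\Phi_m$ is the Young function with $\Phi_m^{-1}(t)=\Phi^{-1}(t)^m$ (equivalently $\Phi_m(t)\sim\Phi(t^{1/m})$), then
\begin{align*}
\|(b-b_Q)^m\nu^{-1/p}\|_{B,Q}\lesssim \|(b-b_Q)^m\|_{\Phi_m,Q}\,\|\nu^{-1/p}\|_{X,Q}.
\end{align*}
The hypothesis $X^{-1}(t)\lesssim B^{-1}(t)/\Phi^{-1}(t)^m$ is exactly this inequality. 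Symmetrically, $Y^{-1}(t)\lesssim C^{-1}(t)/\Phi^{-1}(t)^m$ gives $\|(b-b_Q)^m\mu^{1/q}\|_{C,Q}\lesssim\|(b-b_Q)^m\|_{\Phi_m,Q}\,\|\mu^{1/q}\|_{Y,Q}$.

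Next I would control $\|(b-b_Q)^m\|_{\Phi_m,Q}$ by $\|b\|_{Osc(\Phi)}^m$. Since $\Phi_m(t)\sim\Phi(t^{1/m})$ one checks directly from the definition of the Luxemburg norm that $\|g^m\|_{\Phi_m,Q}\sim\|g\|_{\Phi,Q}^m$ for any nonnegative $g$; applied to $g=|b-b_Q|$ this yields
\begin{align*}
\|(b-b_Q)^m\|_{\Phi_m,Q}\lesssim \|b-b_Q\|_{\Phi,Q}^m\le\|b\|_{Osc(\Phi)}^m.
\end{align*}
Combining the two displays from the previous paragraph with this bound shows that the weight pair $(\mu,\nu)$ in Theorem~\ref{theorem1.3} satisfies
\begin{align*}
&\sup_{Q}|Q|^{\alpha/n+1/q-1/p}\|\mu^{1/q}\|_{A,Q}\|(b-b_Q)^m\nu^{-1/p}\|_{B,Q}\\
&\quad+\sup_{Q}|Q|^{\alpha/n+1/q-1/p}\|(b-b_Q)^m\mu^{1/q}\|_{C,Q}\|\nu^{-1/p}\|_{D,Q}\\
&\qquad\lesssim \|b\|_{Osc(\Phi)}^m\Big(\sup_{Q}|Q|^{\alpha/n+1/q-1/p}\|\mu^{1/q}\|_{A,Q}\|\nu^{-1/p}\|_{X,Q}\\
&\qquad\qquad+\sup_{Q}|Q|^{\alpha/n+1/q-1/p}\|\mu^{1/q}\|_{Y,Q}\|\nu^{-1/p}\|_{D,Q}\Big)<\infty,
\end{align*}
which is precisely the hypothesis of Theorem~\ref{theorem1.2} (with the sparse supremum dominated by the supremum over all cubes). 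Applying Theorem~\ref{theorem1.2} then gives $\|I_\alpha^{b,m}f\|_{L^q(\mu)}\lesssim\|f\|_{L^p(\nu)}$, and tracking the constant through the argument produces the asserted factor $\|b\|_{Osc(\Phi)}^m$.

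The routine technical points are the generalized H\"older inequality $\|fg\|_{B,Q}\lesssim\|f\|_{\Phi_m,Q}\|g\|_{X,Q}$ under the inverse-function condition (this is standard; see the Orlicz-space preliminaries in \cite{Perez3,CruzM}) and the scaling identity $\|g^m\|_{\Phi_m,Q}\sim\|g\|_{\Phi,Q}^m$, both of which I would state as a short lemma and dispatch quickly. The one place needing genuine care—the main obstacle—is bookkeeping the relationship between $\Phi$ and $\Phi_m$ and making sure the ``for large $t$'' clauses in the hypotheses on $X^{-1},Y^{-1}$ suffice: since the Orlicz norms are only sensitive to the behavior of the Young functions near infinity (after the usual normalization, and because we may absorb the behavior near $0$ into constants on a cube of finite measure), the asymptotic inverse-function inequalities are enough, but this reduction should be spelled out. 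No properties of $I_\alpha^{b,m}$ beyond Theorem~\ref{theorem1.2} are needed, so once the Orlicz-norm manipulations are in place the proof is immediate.
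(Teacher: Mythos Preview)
Your proposal is correct and follows essentially the same approach as the paper: define $\Phi_m(t)=\Phi(t^{1/m})$, apply the generalized H\"older inequality (the paper's Lemma~\ref{general Holder}) to factor $\|(b-b_Q)^m\nu^{-1/p}\|_{B,Q}\lesssim\|b-b_Q\|_{\Phi,Q}^m\|\nu^{-1/p}\|_{X,Q}$ and symmetrically for the $C,Y$ pair, and then invoke the bump result for $I_\alpha^{b,m}$. The only cosmetic difference is that the paper appeals to Propositions~\ref{pro2.5}--\ref{pro2.6} and Lemma~\ref{lemma2.4} separately rather than to the packaged Theorem~\ref{theorem1.2}, which amounts to the same thing.
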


When $b\in BMO(\mathbb{R}^n)$, we may take $\Phi(t)=\exp t-1$ in Theorem \ref{theorem1.3}. Then we have the following result.

\begin{corollary}\label{corollary1.4}
Let $1<p\leq q<\infty$, $0<\alpha<n$, $m\in\mathbb{Z}^+$ and $I_\alpha^{b,m}$ be commutators of fractional integral operators. Suppose that $A,D$ are Young functions which satisfy $\bar{A}\in B_{q'}$, $\bar{D}\in B_{p,q}$. If $b\in BMO(\mathbb{R}^n)$ and a pair of weights $(\mu,\nu)$ satisfy
\begin{align*}
&\sup_{Q}|Q|^{\alpha/n+1/q-1/p}\|\mu^{1/q}
\|_{A,Q}\|\nu^{-1/p}\|_{L^{p'}(\log L)^{(m+1)p'-1+\delta},Q}\\
&\qquad+\sup_{Q}|Q|^{\alpha/n+1/q-1/p}
\|\mu^{1/q}\|_{L^{q}(\log L)^{(m+1)q-1+\delta},Q}\|\nu^{-1/p}\|_{D,Q}<\infty,
\end{align*}
then $\|I_{\alpha}^{b,m}f\|_{L^q(\mu)}\lesssim\|b\|_{BMO(\mathbb{R}^n)}^m\|f\|_{L^p(\nu)}$.
\end{corollary}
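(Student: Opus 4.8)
The plan is to deduce Corollary~\ref{corollary1.4} from Theorem~\ref{theorem1.3} by taking $\Phi(t)=\exp t-1$ and then choosing the remaining Young functions so that the hypotheses of Theorem~\ref{theorem1.3} collapse term by term to the bump condition in the statement. For this $\Phi$ one has $\Phi^{-1}(t)=\log(1+t)\approx\log(e+t)$ for large $t$, and, by the John--Nirenberg inequality, $\|b-b_Q\|_{\Phi,Q}\approx|Q|^{-1}\int_Q|b-b_Q|$ uniformly in $Q$; hence $Osc(\Phi)=BMO(\Rn)$ with comparable norms, so $b\in BMO(\Rn)$ forces $b\in Osc(\Phi)$ and $\|b\|_{Osc(\Phi)}^m\lesssim\|b\|_{BMO(\Rn)}^m$.

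Next I would keep $A,D$ as in the corollary (so that $\bar{A}\in B_{q'}$ and $\bar{D}\in B_{p,q}$ are given), set $X(t)=t^{p'}(\log(e+t))^{(m+1)p'-1+\delta}$ and $Y(t)=t^{q}(\log(e+t))^{(m+1)q-1+\delta}$ (Young functions since $\delta>0$ and $p',q>1$), and define auxiliary Young functions $B,C$ by $B^{-1}(t)=X^{-1}(t)(\log(e+t))^m$ and $C^{-1}(t)=Y^{-1}(t)(\log(e+t))^m$ for large $t$, adjusted near the origin in the usual way to be genuine inverses of Young functions. Since $\Phi^{-1}(t)^m\approx(\log(e+t))^m$ for large $t$, the comparison conditions $X^{-1}(t)\lesssim B^{-1}(t)/\Phi^{-1}(t)^m$ and $Y^{-1}(t)\lesssim C^{-1}(t)/\Phi^{-1}(t)^m$ then hold automatically. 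Inverting $X$ and $Y$ and simplifying log-exponents (using $1/p'=1-1/p$ and $1/q=1-1/q'$) gives $B(t)\approx t^{p'}(\log(e+t))^{p'-1+\delta}$ and $C(t)\approx t^{q}(\log(e+t))^{q-1+\delta}$, whose complementary functions are $\bar{B}(t)\approx t^{p}(\log(e+t))^{-1-\delta/(p'-1)}$ and $\bar{C}(t)\approx t^{q'}(\log(e+t))^{-1-\delta/(q-1)}$; a one-line integral test then yields $\bar{C}\in B_{q'}$ and, since $p\le q$ makes $q/p\ge1$, also $\bar{B}\in B_{p,q}$. These are routine Orlicz-function computations (compare the estimates in \cite{CruzM0,LerOR,CruMT}).

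With all the structural hypotheses of Theorem~\ref{theorem1.3} in place and its bump condition coinciding with the one assumed here (the $\nu^{-1/p}$-bump $X$ being exactly $L^{p'}(\log L)^{(m+1)p'-1+\delta}$ and the $\mu^{1/q}$-bump $Y$ exactly $L^{q}(\log L)^{(m+1)q-1+\delta}$), Theorem~\ref{theorem1.3} gives $\|I_{\alpha}^{b,m}f\|_{L^q(\mu)}\lesssim\|b\|_{Osc(\Phi)}^m\|f\|_{L^p(\nu)}\lesssim\|b\|_{BMO(\Rn)}^m\|f\|_{L^p(\nu)}$, which is the claim. There is no serious obstacle in this argument — the genuine work lies in Theorems~\ref{theorem1.1}--\ref{theorem1.3}. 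The only two points that need care are the identification $Osc(\exp t-1)=BMO(\Rn)$ via John--Nirenberg, and checking that the exponent $(q/p)(1+\delta/(p'-1))$ produced by the construction of $B$ is genuinely $>1$, which is exactly where the hypothesis $p\le q$ is used.
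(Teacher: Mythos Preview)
Your proposal is correct and follows essentially the same route as the paper: apply Theorem~\ref{theorem1.3} with $\Phi(t)=e^t-1$, take $X,Y$ to be the log-bump Young functions appearing in the corollary, and produce the auxiliary $B,C$ (equivalently $t^{p'}(\log(e+t))^{p'-1+\delta}$ and $t^{q}(\log(e+t))^{q-1+\delta}$) satisfying the comparison conditions, then invoke John--Nirenberg to identify $Osc(\Phi)$ with $BMO(\Rn)$. The only cosmetic difference is that the paper writes down $B,C$ first and then verifies $X^{-1}\Phi^{-1}{}^m\sim B^{-1}$, $Y^{-1}\Phi^{-1}{}^m\sim C^{-1}$, whereas you define $B^{-1},C^{-1}$ from $X^{-1},Y^{-1}$ and then identify them; the computations and conclusions are the same.
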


In particular, if we take $A(t)=t^q[\log(e+t)]^{q-1+\delta}$ and $D(t)=t^{p'}[\log(e+t)]^{p'-1+\delta}$, then we can get the
following more general two-weight bump conditions than (\ref{improve this condition}) for $I_\alpha^{b,m}$.

\begin{corollary}\label{corollary1.5}
Let $1<p\leq q<\infty$, $0<\alpha<n$, $m\in\mathbb{Z}^+$ and $I_\alpha^{b,m}$ be commutators of fractional integral operators. If $b\in BMO(\mathbb{R}^n)$ and a pair of weights $(\mu,\nu)$ satisfies
\begin{align}\label{weaker}
&\sup_{Q}|Q|^{\alpha/n+1/q-1/p}\|\mu^{1/q}
\|_{L^{q}(\log L)^{q-1+\delta},Q}\|\nu^{-1/p}\|_{L^{p'}(\log L)^{(m+1)p'-1+\delta},Q}\\
&\qquad+\sup_{Q}|Q|^{\alpha/n+1/q-1/p}
\|\mu^{1/q}\|_{L^{q}(\log L)^{(m+1)q-1+\delta},Q}\|\nu^{-1/p}\|_{L^{p'}(\log L)^{p'-1+\delta},Q}<\infty\nonumber
\end{align}
for some $\delta>0$, then $\|I_{\alpha}^{b,m}f\|_{L^q(\mu)}\lesssim\|b\|_{BMO(\mathbb{R}^n)}^m\|f\|_{L^p(\nu)}$.
\end{corollary}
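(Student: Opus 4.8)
The plan is to derive Corollary~\ref{corollary1.5} directly from Corollary~\ref{corollary1.4} by making the judicious choices $A(t)=t^q[\log(e+t)]^{q-1+\delta}$ and $D(t)=t^{p'}[\log(e+t)]^{p'-1+\delta}$, together with the observation that the condition \eqref{weaker} is precisely the hypothesis of Corollary~\ref{corollary1.4} for these $A,D$. Thus the only genuine point to verify is that these two Young functions satisfy the required $B_p$-type integrability: $\bar A\in B_{q'}$ and $\bar D\in B_{p,q}$.

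First I would recall the standard fact about complementary functions of log-bumps: if $A(t)=t^q[\log(e+t)]^{q-1+\delta}$ then, up to equivalence for large $t$, $\bar A(t)\approx t^{q'}[\log(e+t)]^{-(q'-1)(1+\delta/(q-1))}$, or more conveniently one uses the known criterion that $\bar A\in B_{q'}$ if and only if $A$ grows fast enough — concretely, $t^q[\log(e+t)]^{q-1+\delta}$ has complementary function in $B_{q'}$ exactly because of the exponent $q-1+\delta>q-1$ (the borderline case $q-1$ fails, the extra $\delta$ is what makes the integral $\int_1^\infty \bar A(t)/t^{q'}\,dt/t$ converge). I would cite or reprove the elementary lemma: for $A(t)=t^s[\log(e+t)]^{a}$ with $1<s<\infty$, one has $\bar A\in B_{s'}$ iff $a>s-1$. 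Applying this with $s=q$, $a=q-1+\delta$ gives $\bar A\in B_{q'}$.

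For $\bar D\in B_{p,q}$ I would use the definition $\int_1^\infty \bar D(t)^{q/p}t^{-q}\,dt/t<\infty$ recorded in the excerpt, or better, invoke the change-of-variables identity relating $B_{p,q}$ for a pair $(p,q)$ with $1/p-1/q=\alpha/n$ to an ordinary $B_r$ condition; the clean statement is that $\bar D\in B_{p,q}$ holds for $D(t)=t^{p'}[\log(e+t)]^{p'-1+\delta}$ for the same reason, namely that the exponent $p'-1+\delta$ exceeds the critical value $p'-1$. Since $p\le q$, the $B_{p,q}$ condition is no stronger than (in fact is implied by, given $1/p - 1/q = \alpha/n > 0$, via Hölder in the integral over $t$) the corresponding $B_{p'}$ condition on $\bar D$; I would spell out this monotonicity so that the log-bump computation reduces once more to the one-parameter lemma above with $s=p'$, $a=p'-1+\delta$.

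With $\bar A\in B_{q'}$ and $\bar D\in B_{p,q}$ established, Corollary~\ref{corollary1.4} applies verbatim and yields $\|I_\alpha^{b,m}f\|_{L^q(\mu)}\lesssim \|b\|_{BMO(\mathbb{R}^n)}^m\|f\|_{L^p(\nu)}$ under \eqref{weaker}, completing the proof. The only mildly technical obstacle is the bookkeeping in the two log-bump computations — tracking the complementary function of an $L^s(\log L)^a$ bump and confirming the strict inequality $a>s-1$ forces convergence of the defining integral — but this is a standard Orlicz-space computation (see the references on bump conditions cited in the introduction) and involves no new ideas. I would present it as a short self-contained lemma preceding the proof, then dispatch the corollary in two lines.
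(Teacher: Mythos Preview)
Your approach is exactly the paper's: pick $A(t)=t^q[\log(e+t)]^{q-1+\delta}$ and $D(t)=t^{p'}[\log(e+t)]^{p'-1+\delta}$, verify $\bar A\in B_{q'}$ and $\bar D\in B_p\subset B_{p,q}$, and apply Corollary~\ref{corollary1.4}. One small slip to clean up: your lemma with $s=p'$ gives $\bar D\in B_{(p')'}=B_p$, not $B_{p'}$ as you wrote, and the inclusion $B_p\subset B_{p,q}$ needs only $p\le q$ (the relation $1/p-1/q=\alpha/n$ is not part of the hypotheses here).
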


\begin{remark}It is clear that the bump condition in $(1.5)$ for $m=1$ is more general than one in $(1.3)$. Therefore, Corollary \ref{corollary1.5} is an essential improvement and extension to the corresponding result in \cite{Cru}.
\end{remark}

Next, we turn to the necessity of bump conditions for the two-weight boundedness of $I_{\alpha}^{b,m}$, which is addressed by the following theorem.

\begin{theorem}\label{theorem1.7}
Let $1<p\leq q<\infty$, $0<\alpha<n$, $m\in\mathbb{Z}^+$ and $I_\alpha^{b,m}$ be commutators of fractional integral operators. Suppose that $\mu$ is a doubling weight and for any $b\in BMO(\mathbb{R}^n)$,
$$\|I_{\alpha}^{b,m}f\|_{L^{q,\infty}(\mu)}\lesssim\|b\|_{BMO(\mathbb{R}^n)}^m
\|f\|_{L^p(\nu)}.$$
Then
\begin{align*}
\sup_Q|Q|^{\frac{\alpha}{n}+\frac{1}{q}-\frac{1}{p}}\|\mu^{1/q}\|_{L^q,Q}
\|\nu^{-1/p}\|_{L^{p'}(\log L)^{mp'},Q}<\infty.
\end{align*}
\end{theorem}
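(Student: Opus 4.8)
The plan is to exploit the hypothesis by choosing, for each fixed cube $Q$, a well-adapted test function $f$ and a well-adapted symbol $b \in BMO(\mathbb{R}^n)$ so that the lower bound extracted from $I_\alpha^{b,m}f$ on a fixed fraction of $Q$ reproduces exactly the product $|Q|^{\alpha/n+1/q-1/p}\|\mu^{1/q}\|_{L^q,Q}\|\nu^{-1/p}\|_{L^{p'}(\log L)^{mp'},Q}$ appearing in the conclusion. First I would fix a cube $Q$ and normalize: recall that for $x,y \in Q$ one has $|x-y| \lesssim |Q|^{1/n}$, hence the kernel satisfies $|x-y|^{\alpha-n} \gtrsim |Q|^{\alpha/n - 1}$, so for any nonnegative $f$ supported in $Q$ and any $x \in Q$,
\begin{align*}
|I_\alpha^{b,m}f(x)| \gtrsim |Q|^{\alpha/n-1}\left| \int_Q (b(x)-b(y))^m f(y)\,dy\right|.
\end{align*}
The factor $|Q|^{\alpha/n-1}$, after taking the $L^q(\mu)$-type norm over a subset of $Q$ of comparable measure and dividing by $\|f\|_{L^p(\nu)}$, will produce the prefactor $|Q|^{\alpha/n+1/q-1/p}$ once the remaining integrals are of the right homogeneity; this is where the doubling hypothesis on $\mu$ enters, guaranteeing $\mu(E) \gtrsim \mu(Q)$ for the relevant subset $E \subset Q$ on which the pointwise lower bound is valid.

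The heart of the argument is the choice of $b$ that forces the logarithmic bump $L^{p'}(\log L)^{mp'}$ on the $\nu^{-1/p}$ factor. The natural candidate is a logarithmic symbol: take $b(y) = \log\frac{|Q|^{1/n}}{|y-x_Q|}$ truncated suitably (or $b(y)=\log(1/\|\nu^{-1/p}\chi_Q\|\cdots)$-type normalization), which is the canonical extremizer showing $\log L$-bumps are needed for commutators; one checks $b \in BMO(\mathbb{R}^n)$ with norm $\lesssim 1$ uniformly in $Q$, and that $(b(x)-b(y))^m$ behaves like $(\log(\text{something}/|x-y|))^m$ on a substantial portion of $Q \times Q$. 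Feeding $f = \nu^{-p'/p}\chi_{Q'}$ for an appropriate subcube $Q'$ (localized where $\nu^{-1/p}$ is not too small) and computing $\|f\|_{L^p(\nu)}^p = \int_{Q'}\nu^{1-p'} = \int_{Q'}\nu^{-p'/p}$, the left-hand side involves $\int_{Q'}(\log\cdots)^m \nu^{-p'/p}$, and the ratio of these two quantities is precisely what the Orlicz average $\|\nu^{-1/p}\|_{L^{p'}(\log L)^{mp'},Q}$ measures — here one uses the standard duality/characterization of the $L^{p'}(\log L)^{mp'}$ Luxemburg norm via $\int_Q |g|^{p'}(\log(e+|g|/\langle g\rangle))^{mp'}$, optimizing over the choice of $Q'$ or over a level set of $\nu^{-1/p}$. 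Combining, one obtains
\begin{align*}
|Q|^{\alpha/n+1/q-1/p}\,\|\mu^{1/q}\|_{L^q,Q}\,\|\nu^{-1/p}\|_{L^{p'}(\log L)^{mp'},Q} \lesssim \|I_\alpha^{b,m}\|_{L^p(\nu)\to L^{q,\infty}(\mu)} \lesssim 1,
\end{align*}
and taking the supremum over $Q$ finishes the proof.

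The main obstacle I anticipate is the bookkeeping that matches the \emph{power} $mp'$ on the logarithm exactly — one must choose the truncation level of the logarithmic symbol and the cutoff defining $Q'$ (or the level set) in a coordinated way so that the $m$-fold product $(b(x)-b(y))^m$ generates an $m$-th power of a logarithm, and so that the $L^{q,\infty}(\mu)$ norm (rather than $L^q(\mu)$) still delivers $\|\mu^{1/q}\|_{L^q,Q}$ up to the doubling constant; handling weak-type rather than strong-type on the target side requires choosing the level $\lambda \sim |Q|^{\alpha/n-1}\int_Q(b-b(y))^m f$ and the corresponding level set of measure $\gtrsim \mu(Q)$. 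A secondary technical point is ensuring $b$ is globally in $BMO(\mathbb{R}^n)$ with norm independent of $Q$ after truncation; this is routine but must be stated carefully, typically by using $b(y) = \min\{\log^+(r_Q/|y-x_Q|),\, N\}$ and letting the relevant constants absorb the truncation. None of these steps is deep, but the simultaneous calibration of three parameters (the symbol's truncation, the test function's support, and the weak-type level) is the delicate part.
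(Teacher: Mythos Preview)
Your proposal has a genuine gap: the choice of symbol $b$ is wrong for the conclusion you want. The target quantity is
\[
\|\nu^{-1/p}\|_{L^{p'}(\log L)^{mp'},Q}\sim\Big(\frac{1}{|Q|}\int_Q \nu^{1-p'}\Big[\log\Big(e+\frac{\nu^{1-p'}}{(\nu^{1-p'})_Q}\Big)\Big]^{mp'}\Big)^{1/p'},
\]
so the logarithm that must emerge is $\log$ of $\nu^{1-p'}$ relative to its average, not a geometric logarithm like $\log(|Q|^{1/n}/|y-x_Q|)$. A symbol depending only on $|y-x_Q|$ cannot manufacture the weight-dependent logarithm, and your parenthetical alternative ``$\log(1/\|\nu^{-1/p}\chi_Q\|\cdots)$-type normalization'' is a constant, not a function. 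The paper instead takes the Coifman--Rochberg function
\[
g(x)=\log^+\!\Big(\frac{M(\nu^{1-p'}\chi_Q)(x)}{(\nu^{1-p'})_Q}\Big),
\]
which lies in $BMO$ with norm $\lesssim 1$ uniformly in $Q$, and then localizes it via Lemma~\ref{lm3.2} to $\varphi=g-g_Q$ on $Q$, $\varphi=0$ off $2Q$; on $Q$ one has $g(x)\ge\log^+(\nu^{1-p'}(x)/(\nu^{1-p'})_Q)$, which is exactly what feeds the $L(\log L)^{mp'}$ bump.

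There is a second structural gap. Your pointwise bound
\[
|I_\alpha^{b,m}f(x)|\gtrsim |Q|^{\alpha/n-1}\Big|\int_Q (b(x)-b(y))^m f(y)\,dy\Big|
\]
is not justified: the integrand $(b(x)-b(y))^m$ changes sign for odd $m$, and the kernel is not constant, so the kernel lower bound cannot be pulled outside. The paper avoids this entirely by a \emph{separated balls} trick (Lemma~\ref{lm3.1}): one pairs $B\supset Q$ with a disjoint ball $\tilde B$ at distance $\sim Ar$, so that for $x\in B$ and $y\in\tilde B$ the localized symbol vanishes at $y$ and $(b(x)-b(y))^m=\varphi(x)^m$ decouples completely from the kernel integral; the kernel is then replaced by the constant $K_\alpha(x_0,y_0)$ up to an error $\epsilon_A\to 0$, which is absorbed. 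The doubling of $\mu$ is used to pass from $\int_{\tilde B}\mu$ back to $\int_Q\mu$. Finally, the paper handles the weak-type hypothesis by duality, $(I_\alpha^{b,m})^\ast=(-1)^m I_\alpha^{b,m}$ and $L^{q,\infty}(\mu)\leftrightarrow L^{q',1}(\mu)$, rather than by choosing a weak-type level directly. Your outline does not supply any of these three mechanisms (weight-adapted symbol, separation trick, duality), and without them the argument does not close.
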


Finally, we consider the inverse result related to Bloom type estimate for $I_\alpha^{b,m}$. We first recall the relevant definition and backgrounds.
Let $\eta$ be a weight, we say that $b\in BMO_\eta$ if
$$\|b\|_{BMO_\eta}:=\sup_Q\frac{1}{\eta(Q)}\int_Q|b(x)-b_Q|dx<\infty.$$
Bloom \cite{Bl0} first charactered $BMO_\eta$ via the two weight estimate of commutator of Hilbert transform $H$. For the commutator of fractional integral operator, Accomazzo et al. \cite{AMR} proved that if $\lambda,\mu\in A_{p,q}$ and
$\eta=\big({\mu}{\lambda^{-1}}\big)^{1/m}$, then
$$b\in BMO_\eta\Rightarrow\|I_{\alpha}^{b,m}\|_{L^q(\lambda^q)}\lesssim\|b\|_{BMO_\eta}^m
\|f\|_{L^p(\mu^p)}$$
and
$$\|I_{\alpha}^{b,m}\|_{L^q(\lambda^q)}\lesssim\|f\|_{L^p(\mu^p)}\Rightarrow b\in BMO_\eta.$$
The corresponding result for $m=1$ was obtained by Holmes et al. in \cite{HRS}. Our next theorem can be regarded as  the converse of the above Bloom type estimate for $I_\alpha^{b,m}$.

\begin{theorem}\label{theorem1.8}
Let $0<\alpha<n$, $1<p<n/\alpha$, $1/p-1/q=\alpha/n$, $m\in\mathbb{Z}^+$, $\lambda,\mu\in A_{p,q}$ and $I_\alpha^{b,m}$ be commutator of fractional integral operators. If $\eta$ is an arbitrary weight which satisfies
\begin{align}\label{boundedness}
b\in BMO_\eta\Rightarrow\|I_{\alpha}^{b,m}\|_{L^q(\lambda^q)}\lesssim\|b\|_{BMO_\eta}^m
\|f\|_{L^p(\mu^p)}
\end{align}
and
\begin{align}\label{necessity}
\|I_{\alpha}^{b,m}\|_{L^q(\lambda^q)}\lesssim\|f\|_{L^p(\mu^p)}\Rightarrow b\in BMO_\eta,
\end{align}
then $\eta\sim\big({\mu}{\lambda^{-1}}\big)^{1/m}$ almost where.
\end{theorem}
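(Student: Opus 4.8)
The plan is to prove the two-sided equivalence $\eta\sim(\mu\lambda^{-1})^{1/m}$ by establishing two inequalities. Write $\eta_0:=(\mu\lambda^{-1})^{1/m}$. By the result of Accomazzo, Martikainen and Rela quoted in the excerpt, with $\lambda,\mu\in A_{p,q}$ the weight $\eta_0$ itself satisfies both \eqref{boundedness} and \eqref{necessity}. The strategy is to play the hypothesized abstract $\eta$ off against $\eta_0$: from \eqref{necessity} for $\eta$ combined with \eqref{boundedness} for $\eta_0$ we will deduce $\eta\lesssim\eta_0$ in the $BMO$-averaged sense, and symmetrically, from \eqref{boundedness} for $\eta$ and \eqref{necessity} for $\eta_0$ we will deduce $\eta_0\lesssim\eta$; together these give the almost-everywhere comparability.

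First I would set up the forward direction. Fix a cube $Q_0$ and choose a cleverly built test function $b=b_{Q_0}$, supported near $Q_0$, which is ``extremal'' for $BMO_{\eta_0}$ on that cube — e.g. a truncated/localized version of a function whose oscillation on $Q_0$ is comparable to $\eta_0(Q_0)$, so that $\|b_{Q_0}\|_{BMO_{\eta_0}}\lesssim 1$ while $|b_{Q_0}(x)-(b_{Q_0})_{Q_0}|\gtrsim 1$ for $x$ in a fixed large portion of $Q_0$. For such $b_{Q_0}$, estimate $\|I_\alpha^{b_{Q_0},m}f\|_{L^q(\lambda^q)}$ from below by testing against $f=\chi_{Q_0}$ (or a suitable piece of it) and using the pointwise lower bound on the kernel $(b(x)-b(y))^m/|x-y|^{n-\alpha}$ for $x,y\in Q_0$; this is the usual lower-bound-by-testing argument that appears in proofs of necessity of $A_{p,q}$-type conditions. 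Since $\|b_{Q_0}\|_{BMO_{\eta_0}}\lesssim1$, hypothesis \eqref{boundedness} for $\eta_0$ gives $\|I_\alpha^{b_{Q_0},m}\chi_{Q_0}\|_{L^q(\lambda^q)}\lesssim\|\chi_{Q_0}\|_{L^p(\mu^p)}$, i.e. a quantitative bound of the form $\|I_\alpha^{b_{Q_0},m}\|_{L^q(\lambda^q)}\lesssim\|f\|_{L^p(\mu^p)}$ for this specific $b_{Q_0}$ with an absolute constant. Now feed this into the contrapositive of \eqref{necessity} for $\eta$: the hypothesis says boundedness forces $b\in BMO_\eta$, and — crucially — by the closed graph / uniform boundedness principle (or by tracking constants through the proof of \eqref{necessity}) the implication is quantitative, $\|b\|_{BMO_\eta}\lesssim\|I_\alpha^{b,m}\|_{L^q(\lambda^q)\leftarrow L^p(\mu^p)}^{1/m}$. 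Applying this to $b_{Q_0}$ yields
\begin{align*}
\frac{1}{\eta(Q_0)}\int_{Q_0}|b_{Q_0}(x)-(b_{Q_0})_{Q_0}|\,dx\;\lesssim\;1,
\end{align*}
while the left side is $\gtrsim |Q_0|/\eta(Q_0)$ by the lower oscillation bound; rearranging gives $\eta(Q_0)\gtrsim|Q_0|$ — not yet what we want, so the test function must instead be normalized so that its oscillation tracks $\eta_0$, giving $\eta(Q_0)\gtrsim\eta_0(Q_0)$ for every cube $Q_0$. A Lebesgue differentiation argument over shrinking cubes then upgrades $\eta(Q)\gtrsim\eta_0(Q)$ to $\eta\gtrsim\eta_0$ a.e. The reverse inequality $\eta_0\gtrsim\eta$ is obtained by the mirror-image argument: now pick $b=b_{Q_0}$ extremal for $BMO_\eta$ on $Q_0$ (oscillation $\gtrsim\eta(Q_0)/|Q_0|$, $\|b_{Q_0}\|_{BMO_\eta}\lesssim1$), use \eqref{boundedness} for $\eta$ to bound $\|I_\alpha^{b_{Q_0},m}\|_{L^q(\lambda^q)\leftarrow L^p(\mu^p)}\lesssim1$, then invoke the quantitative form of \eqref{necessity} for $\eta_0$ (valid since $\eta_0$ satisfies it, again with constants tracked) to get $\|b_{Q_0}\|_{BMO_{\eta_0}}\lesssim1$, which unwinds to $\eta_0(Q_0)\gtrsim\eta(Q_0)$, and Lebesgue differentiation finishes.

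The main obstacle is making both hypothesized implications \eqref{boundedness} and \eqref{necessity} quantitative with uniform constants. As literally stated they are qualitative ($b\in BMO_\eta\Rightarrow$ bounded, and bounded $\Rightarrow b\in BMO_\eta$), but the whole argument needs a bound like $\|b\|_{BMO_\eta}^m\lesssim\|I_\alpha^{b,m}\|$ with a constant independent of $b$. I expect to handle this by a functional-analytic argument: \eqref{necessity} can be read as saying the (nonlinear, $m$-homogeneous) map $b\mapsto I_\alpha^{b,m}$, viewed appropriately, has closed graph between the relevant spaces, so the a priori estimate follows from the closed graph theorem — or, more elementarily, by a direct examination showing the proof of \eqref{necessity} (which is itself a testing argument) automatically produces the constant. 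A secondary technical point is the construction of the localized extremal test functions $b_{Q_0}$ and verifying the kernel lower bound survives the localization — this is routine but must be done carefully, and it is here that the hypothesis that $\mu$ (hence, via $A_{p,q}$, the relevant averages) is doubling, together with $\lambda,\mu\in A_{p,q}$, gets used to compare local $L^p(\mu^p)$ and $L^q(\lambda^q)$ norms with powers of $|Q_0|$ and the weight masses. Once the quantitative implications are in hand, everything reduces to the two symmetric testing computations plus Lebesgue differentiation, which are standard.
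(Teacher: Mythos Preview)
Your proposal has a genuine gap in the direction $\eta_0\lesssim\eta$ (your ``forward direction''). You correctly identify the obstacle yourself: you need a quantitative form of \eqref{necessity} for the abstract weight $\eta$, namely $\|b\|_{BMO_\eta}^m\lesssim\|I_\alpha^{b,m}\|_{L^p(\mu^p)\to L^q(\lambda^q)}$ with a constant independent of $b$. Neither of your proposed fixes delivers this. The closed graph theorem applies to linear maps between $F$-spaces; the map $b\mapsto I_\alpha^{b,m}$ is $m$-homogeneous and nonlinear, and there is no version of the theorem that yields an a priori inverse bound here. Your other fix, ``tracking constants through the proof of \eqref{necessity}'', is impossible for a more basic reason: \eqref{necessity} is a \emph{hypothesis} on the abstract $\eta$, not a theorem with a proof to inspect. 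As stated it is purely qualitative, and nothing in the setup rules out weights $\eta$ for which the implication holds but with no uniform constant.

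The paper sidesteps this entirely. For $\eta_0\lesssim\eta$ it argues by contradiction: if $\eta_0/\eta\notin L^\infty$, a lemma of Lerner--Ombrosi--Rivera-R\'{\i}os produces some $b\in BMO_{\eta_0}\setminus BMO_\eta$; the Accomazzo et al.\ result then gives $\|I_\alpha^{b,m}f\|_{L^q(\lambda^q)}\lesssim\|f\|_{L^p(\mu^p)}$, and now the \emph{qualitative} hypothesis \eqref{necessity} forces $b\in BMO_\eta$, a contradiction. No quantitative inverse bound is ever needed. For the other direction $\eta\lesssim\eta_0$ the paper is also more direct than your mirror argument: it takes $b=\eta\chi_{\tilde B}$ on a ball $\tilde B$ disjoint from a given ball $B$ (note $\|\eta\chi_{\tilde B}\|_{BMO_\eta}\le 2$ trivially, so no ``extremal'' construction is required), applies the quantitative hypothesis \eqref{boundedness}, and combines a kernel lower bound with Lebesgue differentiation to get $\lambda\eta^m\lesssim\mu$ pointwise. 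Your mirror argument for this half could perhaps be salvaged, since the Accomazzo et al.\ necessity for $\eta_0$ \emph{is} quantitative, but it is needlessly roundabout compared to the direct testing.
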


We organize the rest of the paper as follows. Section 2 is devoted to the proofs of Theorems \ref{theorem1.1}-\ref{theorem1.3}, Corollarys \ref{corollary1.4} and \ref{corollary1.5}. In Section 3, we will show Theorem \ref{theorem1.7} and Theorem \ref{theorem1.8} will be given in Section 4.

We end this section by making some conventions. We denote $f\lesssim g$, $f\thicksim g$ if $f\leq Cg$ and $f\lesssim g \lesssim f$, respectively. For any ball $B:=B(x_0,r)\subset \mathbb{R}^n$, $x_0$ and $r$ are the center and the radius of $B$, respectively, and $f_B$ means the mean value of $f$ over $B$, $\chi_B$ represents the characteristic function of $B$. For any cube $Q\subset\mathbb{R}^n$, the diameter of $Q$ is denoted by diam $Q$. $C_{c}^{\infty}(\Rn)$ is the space of all smooth functions with compact support.

\section{Two-weight boundedness for $I_\alpha^{b,m}$}

In this section, we will prove Theorems \ref{theorem1.1}-\ref{theorem1.3} and Corollary \ref{corollary1.4}-\ref{corollary1.5}. To begin with recalling some notation,definitions and facts related to sparse families (see \cite{LerNa,Perey} for more details).
Given a cube $Q\subset\mathbb{R}^n$, let $\mathcal{D}(Q)$ be the set of cubes obtained by repeatedly subdividing $Q$ and its descendants into $2^n$ congruent subcubes.
\begin{definition}
A collection of cubes $\mathcal{D}$ is called a dyadic lattice if it satisfies the following properties:\\
$(1)$ if $Q\in\mathcal{D}$, then every child of $Q$ is also in $\mathcal{D}$;\\
$(2)$ for every two cubes $Q_1, Q_2\in\mathcal{D}$, there is a common ancestor $Q\in\mathcal{D}$ such that $Q_1, Q_2\in\mathcal{D}(Q)$;\\
$(3)$ for any compact set $K\subset\mathbb{R}^n$, there is a cube $Q\in\mathcal{D}$ such that $K\subset Q$.
\end{definition}

\begin{definition}
A subset $\mathcal{S}\subset\mathcal{D}$ is called an $\eta$-sparse family with $\eta\in(0,1)$ if for every cube $Q\in\mathcal{S}$, there is a measurable subset $E_Q\subset Q$ such that $\eta|Q|\leq|E_Q|$, and the sets $\{E_Q\}_{Q\in\mathcal{S}}$ are mutually disjoint.
\end{definition}

In \cite{AMR}, Accomazzo et al. proved the following sparse dominations for commutators of fractional integral operators.
\begin{lemma}{\rm(cf. \cite{AMR})}\label{sparse domination}
Let $0<\alpha<n$ and $m\in\mathbb{Z}^+$. For every $f\in C_c^\infty(\mathbb{R}^n)$ and $b\in L_{loc}^{m}(\mathbb{R}^n)$, there exist a family $\{\mathcal{D}_j\}_{j=1}^{3^n}$ of dyadic lattices and a family $\{\mathcal{S}_j\}_{j=1}^{3^n}$ of sparse families such that $\mathcal{S}_j\subset\mathcal{D}_j$, for each $j$, and
$$|I_{\alpha}^{b,m}f(x)|\lesssim\sum_{j=1}^{3^n}\sum_{Q\in\mathcal{S}_j}\sum_{k=0}^{m}
|b(x)-b_Q|^{m-k}|Q|^{\alpha/n}\Big(\frac{1}{|Q|}\int_Q|b(x)-b_Q|^k|f(x)|dx\Big)\chi_Q(x).$$
\end{lemma}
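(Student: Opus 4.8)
The plan is to prove this by the Calder\'{o}n-Zygmund stopping-time construction that underlies pointwise sparse domination, in the form familiar from the treatment of commutators of singular integrals, adapted to the fractional kernel. The only algebra involved is the expansion, valid for every cube $Q$ and all $x,y$,
\[
(b(x)-b(y))^m=\big((b(x)-b_Q)-(b(y)-b_Q)\big)^m=\sum_{k=0}^m\binom{m}{k}(-1)^k(b(x)-b_Q)^{m-k}(b(y)-b_Q)^k ,
\]
which, once the integration is localized to $Q$, writes the commutator kernel as a sum of $m+1$ pieces whose $k$-th piece is $|b(x)-b_Q|^{m-k}$ times the plain fractional kernel tested against $(b-b_Q)^k f$; this is exactly what forces the factors $|b(x)-b_Q|^{m-k}$ and the averages of $|b-b_Q|^k|f|$ in the conclusion. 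To localize, I would use the standard passage to $3^n$ shifted dyadic lattices $\{\mathcal D_j\}$: from the pointwise bound $|x-y|^{\alpha-n}\lesssim\sum_{j=1}^{3^n}\sum_{Q\in\mathcal D_j:\,x,y\in Q}|Q|^{\alpha/n-1}$ --- in one of the lattices some cube of side comparable to $|x-y|$ contains both points, and the tail over larger cubes converges geometrically since $\alpha<n$ --- one gets $|I_\alpha^{b,m}f(x)|\lesssim\sum_j\sum_{Q\in\mathcal D_j}|Q|^{\alpha/n-1}\chi_Q(x)\int_Q|b(x)-b(y)|^m|f(y)|\,dy$, and after inserting the expansion above it remains, for each fixed lattice $\mathcal D=\mathcal D_j$, to dominate the full dyadic sum by a sum over a sparse subfamily. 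One cannot simply pass to a sparse subfamily of $\mathcal D$: for fixed $x$ the cubes of $\mathcal D$ containing $x$ form a chain (which is itself sparse), but the chains for different $x$ cannot be merged into one sparse family, and a single global appeal to the known sparse bound for $I_\alpha$ does not help either, because re-centering $b_Q$ across the cubes of a sparse family costs unbounded logarithmic factors; a genuine recursive stopping-time selection is therefore needed.

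The recursion proceeds as follows. Fix a top cube $Q_0\in\mathcal D$ (a general $f\in C_c^\infty(\mathbb R^n)$ is reduced to this case by exhausting $\mathbb R^n$ by such cubes via property $(3)$ of a dyadic lattice). Declare the stopping children of $Q_0$ to be the maximal dyadic cubes $P\subsetneq Q_0$ for which at least one of $\langle|f|\rangle_P>C\langle|f|\rangle_{Q_0}$, $\langle|b-b_{Q_0}|^k|f|\rangle_P>C\langle|b-b_{Q_0}|^k|f|\rangle_{Q_0}$ $(1\le k\le m)$, or a fractional grand-maximal-truncation condition holds; taking $C$ large, the weak-type bounds for the dyadic maximal operator and for the pertinent fractional maximal and truncation operators give $\sum_P|P|\le\tfrac12|Q_0|$, so iterating this selection down through all generations produces an $\eta$-sparse family $\mathcal S\subset\mathcal D(Q_0)$ with pairwise disjoint sets $E_Q=Q\setminus\bigcup\{\,\text{stopping children of }Q\,\}$. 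On $E_{Q_0}$, and at all scales between a stopping child and $Q_0$ where the averages have not yet jumped, the contribution of $I_\alpha^{b,m}(f\chi_{Q_0})$ is bounded --- using the expansion above with base point $b_{Q_0}$ and the radial decay of the kernel --- by $\sum_{k=0}^m|b(x)-b_{Q_0}|^{m-k}|Q_0|^{\alpha/n}\langle|b-b_{Q_0}|^k|f|\rangle_{Q_0}$; on each stopping child $P$ one restarts with $I_\alpha^{b,m}(f\chi_{3P})$, the error $I_\alpha^{b,m}(f\chi_{\mathbb R^n\setminus 3P})$ evaluated on $P$ being absorbed into the good term of $Q_0$ and, after re-centering $b_{Q_0}\to b_P$, into lower-order terms already of the required shape. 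Summing the good terms over $Q\in\mathcal S$ and over $j=1,\dots,3^n$ yields the asserted pointwise inequality.

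I expect the main obstacle to be the non-locality of $I_\alpha$. For a Calder\'{o}n-Zygmund operator the tail $T(g\chi_{\mathbb R^n\setminus 3P})(x)$ with $x\in P$ is controlled by a single average around $\partial P$; here $I_\alpha(g\chi_{\mathbb R^n\setminus 3P})(x)$ is instead a slowly decaying sum over all scales above $P$, so one must build a fractional grand maximal truncation operator into the stopping rule, prove its weak-type bound, and then check that the $|b(x)-b_Q|$ factors generated at the various scales of this tail telescope into the single factor $|b(x)-b_Q|^{m-k}$ (paired with $\langle|b-b_Q|^k|f|\rangle_Q$) rather than accumulating logarithmic losses --- the geometric gain from $\alpha>0$ is what makes this work. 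The second delicate point is the bookkeeping of the base-point change $b_{Q_0}\to b_P$ at each step: one expands $(b(x)-b_{Q_0})^{m-k}$ and $|b-b_{Q_0}|^k$ around $b_P$ and controls the resulting powers of $b_P-b_{Q_0}$ by the stopping conditions, so that every error term is again of the permitted form $|b(x)-b_Q|^{m-i}|Q|^{\alpha/n}\langle|b-b_Q|^i|f|\rangle_Q\chi_Q(x)$ with $Q\in\mathcal S$ and $0\le i\le m$.
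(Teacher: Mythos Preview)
The paper does not prove this lemma at all: it is quoted verbatim from \cite{AMR} (the ``(cf.\ \cite{AMR})'' in the lemma heading signals exactly this), and the authors use it as a black box to derive Lemma~\ref{lemma2.4}. So there is no ``paper's own proof'' to compare your proposal against.

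That said, your sketch is faithful to how the result is actually established in \cite{AMR}: the binomial expansion of $(b(x)-b(y))^m$ around $b_Q$, the passage to $3^n$ dyadic lattices, and a recursive stopping-time selection driven by a fractional grand maximal truncation operator together with level sets of the averages $\langle |b-b_{Q_0}|^k|f|\rangle$. Your identification of the two delicate points --- controlling the non-local tail of $I_\alpha$ via a fractional maximal-type operator with a weak-type bound, and the bookkeeping of the base-point shift $b_{Q_0}\to b_P$ --- is exactly right; the geometric gain from $\alpha>0$ is indeed what makes the tail sum converge. One small correction: your remark that ``a single global appeal to the known sparse bound for $I_\alpha$ does not help'' is slightly too pessimistic as a matter of history --- some proofs do bootstrap from the sparse bound for $I_\alpha$ itself together with a separate sparse domination for an auxiliary ``commutator maximal'' operator --- but the direct recursive argument you outline is the one used in \cite{AMR} and is self-contained. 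Had the paper included a proof, it would have been along the lines you wrote.
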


Based on Lemma \ref{sparse domination}, we can prove the following lemma.
\begin{lemma}\label{lemma2.4}
Let $0<\alpha<n$, $m\in\mathbb{Z}^+$, $b\in L_{loc}^{m}(\mathbb{R}^n)$ and $I_\alpha^{b,m}$ be commutators of fractional integral operators. Then for $f\in C_c^\infty(\mathbb{R}^n)$, there exist $3^n$ sparse families $\mathcal{S}_j\subset\mathcal{D}_j$, $j=1,\cdots,3^n$, such that
\begin{align*}
|I_{\alpha}^{b,m}f(x)|\lesssim\sum_{j=1}^{3^n}(T_{\mathcal{S}_j,\alpha}^{b,m}f(x)+
(T_{\mathcal{S}_j,\alpha}^{b,m})^\ast f(x)),
\end{align*}
where $T_{\mathcal{S},\alpha}^{b,m}$ and $(T_{\mathcal{S},\alpha}^{b,m})^\ast$ are defined in Theorem \ref{theorem1.1}.
\end{lemma}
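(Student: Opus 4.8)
The plan is to start from the pointwise sparse domination in Lemma \ref{sparse domination}, which already bounds $|I_{\alpha}^{b,m}f(x)|$ by a sum over $j$ of double sums
$$\sum_{Q\in\mathcal{S}_j}\sum_{k=0}^{m}|b(x)-b_Q|^{m-k}|Q|^{\alpha/n}\Big(\frac{1}{|Q|}\int_Q|b(x)-b_Q|^k|f(x)|\,dx\Big)\chi_Q(x),$$
and to show that each inner term is controlled, up to a constant, by $T_{\mathcal{S}_j,\alpha}^{b,m}f(x)+(T_{\mathcal{S}_j,\alpha}^{b,m})^{\ast}f(x)$. The key observation is that the two extreme values of the index $k$ already produce exactly the two operators in the statement: $k=0$ gives $|b(x)-b_Q|^m|Q|^{\alpha/n}\big(|Q|^{-1}\int_Q|f|\big)\chi_Q$, which is pointwise dominated by $(T_{\mathcal{S}_j,\alpha}^{b,m})^{\ast}f(x)$, and $k=m$ gives $|Q|^{\alpha/n}\big(|Q|^{-1}\int_Q|b-b_Q|^m|f|\big)\chi_Q$, which is pointwise dominated by $T_{\mathcal{S}_j,\alpha}^{b,m}f(x)$. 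So the real content is to absorb the intermediate terms $1\le k\le m-1$.

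For a fixed intermediate $k$, I would split the factor $|b(x)-b_Q|^{m-k}$ that sits outside the average and the factor $|b(x)-b_Q|^{k}$ that sits inside, and estimate the inside one by a trivial bound on the cube $Q$: since the expression is multiplied by $\chi_Q(x)$, one can simply use that on the support one is free to compare, but more cleanly I would invoke the elementary pointwise inequality
$$|b(y)-b_Q|^k\,|b(x)-b_Q|^{m-k}\lesssim |b(x)-b_Q|^{m}+|b(y)-b_Q|^{m}$$
valid for all $x,y$ and all $0\le k\le m$ (which follows from Young's inequality for products, $st\le s^{r}/r+t^{r'}/r'$ with the exponents $m/k$ and $m/(m-k)$, together with $|b(y)-b_Q|^{k}|b(x)-b_Q|^{m-k}\le \big(|b(y)-b_Q|+|b(x)-b_Q|\big)^m\lesssim |b(y)-b_Q|^m+|b(x)-b_Q|^m$). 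Plugging this in and using $\chi_Q(x)$ to keep $x\in Q$, the $k$-th term is bounded by the sum of the $k=m$ term and the $k=0$ term, hence by $T_{\mathcal{S}_j,\alpha}^{b,m}f(x)+(T_{\mathcal{S}_j,\alpha}^{b,m})^{\ast}f(x)$. Summing over the finitely many values of $k$ (from $0$ to $m$) only costs a constant $C_m$, and summing over $j$ from $1$ to $3^n$ reproduces the claimed inequality.

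The only mild subtlety — and what I would flag as the ``main obstacle,'' though it is more a point of care than a genuine difficulty — is that in the outer factor of the $(T_{\mathcal{S},\alpha}^{b,m})^{\ast}$-type term and in $T_{\mathcal{S},\alpha}^{b,m}$ itself the variable $x$ is the same one that is integrated against in the defining average; one must be careful that the pointwise comparison above is applied with the correct roles of the ``outside'' variable (the point $x$ at which we evaluate, cut off by $\chi_Q$) and the ``inside'' integration variable $y$, and that in $T_{\mathcal{S},\alpha}^{b,m}f$ as written in Theorem \ref{theorem1.1} the kernel $|b(x)-b_Q|^m$ inside the integral is indeed integrated in the same variable as $f$. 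With the notation of the theorem this matches up directly, so after this bookkeeping the proof is a finite sum of elementary pointwise estimates and requires no further machinery.
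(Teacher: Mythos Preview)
Your proposal is correct and follows essentially the same approach as the paper. The paper also reduces to the pointwise inequality $|b(x)-b_Q|^{m-k}|b(y)-b_Q|^{k}\le \max\{|b(x)-b_Q|,|b(y)-b_Q|\}^{m}$ (equivalent to your $\lesssim |b(x)-b_Q|^{m}+|b(y)-b_Q|^{m}$), sums over $k$ to pick up the factor $m+1$, and then invokes Lemma~\ref{sparse domination}; your Young/binomial justification is just a different packaging of the same elementary estimate.
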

\begin{proof}
Fix a sparse family $\mathcal{S}$, let $Q\in\mathcal{S}$ and $x\in Q$, then
\begin{align*}
&|Q|^{\alpha/n}\sum_{k=0}^m|b(x)-b_Q|^{m-k}\frac{1}{|Q|}\int_Q|b(y)-b_Q|^k|f(y)|dy\\
&\quad\leq\frac{1}{|Q|}\int_Q\Big(\sum_{k=0}^m\max\{|b(x)-b_Q|,|b(y)-b_Q|\}^m\Big)
|f(y)|dy|Q|^{\alpha/n}\\
&\quad=(m+1)\frac{1}{|Q|}\int_Q\max\{|b(x)-b_Q|^m,|b(y)-b_Q|^m\}|f(y)|dy|Q|^{\alpha/n}\\
&\quad\lesssim|b(x)-b_Q|^m\frac{1}{|Q|}\int_Q|f(y)|dy|Q|^{\alpha/n}+
\frac{1}{|Q|}\int_Q|b(y)-b_Q|^m|f(y)|dy|Q|^{\alpha/n}.
\end{align*}
This, together with Lemma \ref{sparse domination}, leads to the desired conclusion and completes the proof of Lemma \ref{lemma2.4}.
\end{proof}

The proof of Theorem \ref{theorem1.1} is converted into the following two propositions.

\begin{proposition}\label{pro2.5}
Let $0<\alpha<n, m\in\mathbb{Z}^+$, $b\in L_{loc}^{m}(\mathbb{R}^n)$ and $\mathcal{S}$ be a sparse family. Assume that $1<p\leq q<\infty$ and $A,B$ are Young functions that satisfy $\bar{A}\in B_{q'},\bar{B}\in B_{p,q}$. If $(\mu,\nu)$ is a pair of weights that satisfies
\begin{align*}
\sup_{Q\in\mathcal{S}}|Q|^{{\alpha}/{n}+{1}/{q}-{1}/{p}}
\|\mu^{1/q}\|_{A,Q}\|(b-b_Q)^m\nu^{-1/p}\|_{B,Q}<\infty,
\end{align*}
then
\begin{align}\label{2.1}
\|T_{\mathcal{S},\alpha}^{b,m}f\|_{L^q(\mu)}\leq C\|f\|_{L^p(\nu)}.
\end{align}

Conversely, if $T_{\mathcal{S},\alpha}^{b,m}$ satisfies \eqref{2.1}, then the pair of weights $(\mu,\nu)$ satisfies
\begin{align*}
\sup_{Q\in\mathcal{S}}|Q|^{{\alpha}/{n}+{1}/{q}-{1}/{p}}
\|\mu^{1/q}\|_{q,Q}\|(b-b_Q)^m\nu^{-1/p}\|_{p',Q}<\infty.
\end{align*}
\end{proposition}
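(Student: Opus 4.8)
The plan is to prove the two directions separately. For the sufficiency part \eqref{2.1}, I would argue by duality: since $1<p\le q<\infty$, it suffices to show that for all nonnegative $f\in L^p(\nu)$ and $g\in L^{q'}(\mu)$ one has
\begin{align*}
\sum_{Q\in\mathcal{S}}|Q|^{\alpha/n}\Big(\frac{1}{|Q|}\int_Q|b-b_Q|^m f\Big)\Big(\int_Q g\,\mu\Big)\lesssim \|f\|_{L^p(\nu)}\|g\mu^{1/q}\|_{L^{q'}}\cdot(\text{const}),
\end{align*}
after writing $f = (f\nu^{1/p})\nu^{-1/p}$ and $\int_Q g\mu = \int_Q (g\mu^{1/q'})\mu^{1/q}$. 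Then on each $Q$, apply the generalized Hölder inequality in Orlicz spaces (the standard three-function version) to split
\begin{align*}
\frac{1}{|Q|}\int_Q |b-b_Q|^m f \le \|(b-b_Q)^m\nu^{-1/p}\|_{B,Q}\,\|f\nu^{1/p}\|_{\bar B,Q}
\end{align*}
and $\frac{1}{|Q|}\int_Q g\mu \le \|\mu^{1/q}\|_{A,Q}\|g\mu^{1/q'}\|_{\bar A,Q}$ — wait, more carefully one writes $\frac{1}{|Q|}\int_Q g\mu^{1/q'}\cdot\mu^{1/q}$. Pulling out the bump quantity $|Q|^{\alpha/n+1/q-1/p}\|\mu^{1/q}\|_{A,Q}\|(b-b_Q)^m\nu^{-1/p}\|_{B,Q}$, which is bounded by hypothesis, the remaining sum is
\begin{align*}
\sum_{Q\in\mathcal{S}}|Q|^{1/p-1/q+1}\,\|f\nu^{1/p}\|_{\bar B,Q}\,\|g\mu^{1/q'}\|_{\bar A,Q}\,\frac{|E_Q|}{|Q|}\cdot\frac{|Q|}{|E_Q|},
\end{align*}
and using $|E_Q|\le|Q|\le\eta^{-1}|E_Q|$ together with $|Q|^{1/p-1/q}\le |E_Q\text{-related factor}|$... the point is to convert the sum over $Q$ into an integral over $\mathbb{R}^n$ of a product of two maximal-type functions: the Orlicz maximal operators $M_{\bar B}(f\nu^{1/p})$ and $M_{\bar A}(g\mu^{1/q'})$, evaluated on $E_Q$. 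Here the fractional scaling $|Q|^{\alpha/n}$ has been absorbed into the bump and the condition $1/p-1/q=\alpha/n$ is \emph{not} assumed, so one must be careful: since $1/p\ge 1/q$, one has $|Q|^{1/p-1/q}$ with nonnegative exponent, but this extra power must be controlled using the sparseness and the relation to the $|E_Q|$'s; this is exactly the mechanism in Cruz-Uribe--Moen \cite{CruzM}, where a fractional sparse operator estimate reduces to Hölder plus the $B_{p,q}$ condition on the Orlicz maximal operator.

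Concretely, after the Hölder splitting the remaining sum becomes $\sum_Q a_Q b_Q |Q|^{1-\alpha/n-1/q+1/p}$-type; I would instead peel things so that the residual sum is $\sum_{Q}\big(\inf_{E_Q}M_{\bar B}(f\nu^{1/p})\big)\big(\inf_{E_Q}M_{\bar A}(g\mu^{1/q'})\big)|Q|$ and dominate $|Q|\le \eta^{-1}|E_Q|$, then by disjointness of the $E_Q$ and Hölder with exponents $p$ and... no — with the Carleson/universal-maximal trick, $\sum_Q (\inf_{E_Q} F)(\inf_{E_Q} G)|E_Q|\le \int F\cdot G$. The key analytic inputs are then: (i) $\bar A\in B_{q'}$ implies $M_{\bar A}:L^{q'}\to L^{q'}$ is bounded, so $\|M_{\bar A}(g\mu^{1/q'})\|_{q'}\lesssim \|g\mu^{1/q'}\|_{q'}=\|g\|_{L^{q'}(\mu)}$; (ii) $\bar B\in B_{p,q}$ implies the \emph{fractional} Orlicz maximal operator $M_{\bar B,\beta}$ (with $\beta$ chosen so the scaling matches, i.e. corresponding to the leftover power of $|Q|$) maps $L^p\to L^q$, so $\|M_{\bar B}(\cdot)\|$ against the leftover $|Q|$-power is controlled by $\|f\nu^{1/p}\|_p=\|f\|_{L^p(\nu)}$. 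Finally Hölder with exponents $q$ and $q'$ (noting $q\ge p$ so $L^q\hookrightarrow$ usable against the $L^{q'}$ factor over the disjoint $E_Q$'s) closes the estimate. I expect the bookkeeping of the exponents of $|Q|$ — making sure the $|Q|^{\alpha/n+1/q-1/p}$ in the bump condition, the scaling in the fractional Orlicz maximal operator $M_{\bar B,\cdot}$, and the $B_{p,q}$ rather than $B_p$ membership all fit together — to be the main obstacle; this is where the hypothesis $p\le q$ and the precise definition of $B_{p,q}$ are essential.

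For the necessity (converse) direction, the plan is the standard testing argument. Fix $Q\in\mathcal{S}$ and test \eqref{2.1} on $f=\chi_Q\,\nu^{-p'/p}(b-b_Q)^{m p'}\cdot(\text{appropriate normalization})$ — more precisely, one chooses $f$ supported in $Q$ so that $\frac{1}{|Q|}\int_Q |b-b_Q|^m f$ is comparable to $\|(b-b_Q)^m\nu^{-1/p}\|_{p',Q}^{p'}$-type quantity times $\|\nu^{-1/p}\|$ factors; the cleanest choice is $f=(b-b_Q)^{m(p'-1)}\nu^{-p'}\chi_Q$ so that $\|f\|_{L^p(\nu)}^p = \int_Q |b-b_Q|^{mp'}\nu^{-p'/p\cdot?}$... one calibrates the powers so that $\|f\|_{L^p(\nu)} = \big(\int_Q (|b-b_Q|^m\nu^{-1/p})^{p'}\big)^{1/p}$. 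Then since $T_{\mathcal{S},\alpha}^{b,m}f(x)\ge |Q|^{\alpha/n}\big(\frac{1}{|Q|}\int_Q|b-b_Q|^m f\big)\chi_Q(x)$ by taking the single cube $Q$ in the supremum, the left side of \eqref{2.1} is at least $|Q|^{\alpha/n}\big(\frac{1}{|Q|}\int_Q|b-b_Q|^m f\big)\mu(Q)^{1/q}$. Plugging the explicit $f$ in, both sides become powers of $\int_Q(|b-b_Q|^m\nu^{-1/p})^{p'}$ and $\mu(Q)$; rearranging and using $\mu(Q)^{1/q}=|Q|^{1/q}\|\mu^{1/q}\|_{q,Q}$ and $\big(\frac{1}{|Q|}\int_Q(|b-b_Q|^m\nu^{-1/p})^{p'}\big)^{1/p'}=\|(b-b_Q)^m\nu^{-1/p}\|_{p',Q}$, one reads off exactly $|Q|^{\alpha/n+1/q-1/p}\|\mu^{1/q}\|_{q,Q}\|(b-b_Q)^m\nu^{-1/p}\|_{p',Q}\le C$, with $C$ the operator norm in \eqref{2.1}. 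This direction is routine once the test function powers are calibrated; the only minor care needed is to ensure $f\in L^p(\nu)$ (finite), which holds because $b\in L^m_{loc}$ and we work on a fixed bounded cube, and to handle the degenerate case where the integral vanishes or is infinite by a standard truncation/exhaustion.
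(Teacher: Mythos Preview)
Your proposal is correct and follows essentially the same route as the paper. For sufficiency the paper does exactly what you outline: dualize, apply the generalized H\"older inequality on each $Q$ to split off $\|(b-b_Q)^m\nu^{-1/p}\|_{B,Q}$ and $\|\mu^{1/q}\|_{A,Q}$, absorb these (together with $|Q|^{\alpha/n+1/q-1/p}$) into the bump constant, use sparseness $|Q|\lesssim|E_Q|$ to dominate the remaining sum by $\int M_{\bar A}(g\mu^{1/q'})\,M_{\beta,\bar B}(f\nu^{1/p})$ with $\beta/n=1/p-1/q$, and finish with H\"older in $L^q\times L^{q'}$ using $\bar A\in B_{q'}\Rightarrow M_{\bar A}:L^{q'}\to L^{q'}$ and $\bar B\in B_{p,q}\Rightarrow M_{\beta,\bar B}:L^p\to L^q$ (the Cruz-Uribe--Moen result you cite). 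For the converse, the paper tests on $f=|b-b_Q|^{m(p'-1)}\nu^{-p'/p}\chi_Q$ (your ``calibrated'' choice) and rearranges exactly as you describe.
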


\begin{proof}
By duality, there exists nonnegative measurable function $g\in L^{q'}(\mu)$ with $\|g\|_{L^{q'}(\mu)}=1$ such that
\begin{align}\label{2.2}
\|T_{\mathcal{S},\alpha}^{b,m}\|_{L^q(\mu)}&=\int_{\mathbb{R}^n}T_{\mathcal{S},\alpha}
^{b,m}f(x)g(x)\mu(x)dx\\
&\leq\sum_{Q\in\mathcal{S}}|Q|^{\alpha/n+1}\Big(\frac{1}{|Q|}\int_Q|b(x)-b_Q|^m|f(x)|dx\Big)
\Big(\frac{1}{|Q|}\int_Q|g(x)|\mu(x)dx\Big)\nonumber.
\end{align}
Let $1/p-1/q=\beta/n$, it was proved in \cite{CruzM} that
\begin{align*}
M_{\beta,\bar{B}}: L^p(\mathbb{R}^n)\rightarrow L^q(\mathbb{R}^n).
\end{align*}
From this, by \eqref{2.2}, the generalized H\"{o}lder inequality and our assumptions yield that
\begin{align*}
\|T_{\mathcal{S},\alpha}^{b,m}\|_{L^q(\mu)}&\leq\sum_{Q\in\mathcal{S}}
\|(b-b_Q)^m\nu^{-1/p}\|_{B,Q}\|f\nu^{1/p}\|_{\bar{B},Q}\|\mu^{1/q}\|_{A,Q}
\|g\mu^{1/q'}\|_{\bar{A},Q}|Q|^{1+\frac{\alpha}{n}+\frac{1}{q}-\frac{1}{p}+\frac{\beta}{n}}\\
&\lesssim|E_Q||Q|^{\beta/n}\|f\nu^{1/p}\|_{\bar{B},Q}\|g\mu^{1/q'}\|_{\bar{A},Q}\\
&\leq\int_{\mathbb{R}^n}M_{\bar{A}}(g\mu^{1/q'})(x)M_{\beta,\bar{B}}(f\nu^{1/p})(x)dx\\
&\leq\|M_{\beta,\bar{B}}(f\nu^{1/p})\|_{L^q}\|M_{\bar{A}}(g\mu^{1/q'})\|_{L^{q'}}
\lesssim\|f\|_{L^p(\nu)}.
\end{align*}

Next, we turn to prove necessity. Fix $Q\in\mathcal{S}$, let $f=|b-b_Q|^{m(p'-1)}\nu^{-p'/p}\chi_Q$. For $x\in Q$, it is easy to see that
\begin{align*}
T_{\mathcal{S},\alpha}^{b,m}f(x)\geq
|Q|^{\alpha/n-1}\int_Q|b(x)-b_Q|^{mp'}\nu(x)^{-p'/p}dx,
\end{align*}
which implies that
\begin{align*}
\Big(\int_QT_{\mathcal{S},\alpha}^{b,m}f(x)^q\mu(x)dx\Big)^{1/q}\geq
|Q|^{\alpha/n-1}\int_Q|b(x)-b_Q|^{mp'}\nu(x)^{-p'/p}dx\Big(\int_Q\mu(x)dx\Big)^{1/q}.
\end{align*}
On the other hand,
\begin{align*}
\Big(\int_QT_{\mathcal{S},\alpha}^{b,m}f(x)^q\mu(x)dx\Big)^{1/q}&\leq C\Big(\int_{\mathbb{R}^n}|f(x)|^p\nu(x)dx\Big)^{1/p}\\
&=C\Big(\int_Q|b(x)-b_Q|^{mp'}\nu(x)^{-p'/p}dx\Big)^{1/p}.
\end{align*}
Hence, we conclude that
\begin{align*}
&|Q|^{\alpha/n-1}\int_Q|b(x)-b_Q|^{mp'}\nu(x)^{-p'/p}dx\Big(\int_Q\mu(x)dx\Big)^{1/q}\\
&\quad\leq C\Big(\int_Q|b(x)-b_Q|^{mp'}\nu(x)^{-p'/p}dx\Big)^{1/p}.
\end{align*}
The desired result follows by rearranging the above terms.
\end{proof}

Similarly, we can obtain the following proposition, and we leave the details for the interested readers.

\begin{proposition}\label{pro2.6}
Let $0<\alpha<n, m\in\mathbb{Z}^+$, $b\in L_{loc}^{m}(\mathbb{R}^n)$ and $\mathcal{S}$ be a sparse family. Assume that $1<p\leq q<\infty$ and $C,D$ are Young functions that satisfy $\bar{C}\in B_{q'},\bar{D}\in B_{p,q}$. If $(\mu,\nu)$ is a pair of weights that satisfies
\begin{align*}
\sup_{Q\in\mathcal{S}}|Q|^{{\alpha}/{n}+{1}/{q}-{1}/{p}}
\|(b-b_Q)^m\mu^{1/q}\|_{C,Q}\|\nu^{-1/p}\|_{D,Q}<\infty,
\end{align*}
then
\begin{align}\label{2.3}
\|(T_{\mathcal{S},\alpha}^{b,m})^\ast f\|_{L^q(\mu)}\leq C\|f\|_{L^p(\nu)}.
\end{align}

Conversely, if $(T_{\mathcal{S},\alpha}^{b,m})^\ast$ satisfies \eqref{2.3}, then the pair of weights $(\mu,\nu)$ satisfies
\begin{align*}
\sup_{Q\in\mathcal{S}}|Q|^{{\alpha}/{n}+{1}/{q}-{1}/{p}}
\|(b-b_Q)^m\mu^{1/q}\|_{q,Q}\|\nu^{-1/p}\|_{p',Q}<\infty.
\end{align*}
\end{proposition}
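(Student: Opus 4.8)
The plan is to mirror exactly the proof of Proposition \ref{pro2.5}, interchanging the roles played by the weight $\mu$ and the weight $\nu$, since $(T_{\mathcal{S},\alpha}^{b,m})^\ast$ differs from $T_{\mathcal{S},\alpha}^{b,m}$ only by moving the oscillation factor $|b(x)-b_Q|^m$ from inside the average onto the outside (where it will pair with $g\mu^{1/q'}$ rather than with $f\nu^{1/p}$). First I would use duality: pick a nonnegative $g\in L^{q'}(\mu)$ with $\|g\|_{L^{q'}(\mu)}=1$ realizing $\|(T_{\mathcal{S},\alpha}^{b,m})^\ast f\|_{L^q(\mu)}$, and expand the pairing as a sum over $Q\in\mathcal{S}$ of $|Q|^{\alpha/n+1}\big(|Q|^{-1}\int_Q|f|\big)\big(|Q|^{-1}\int_Q|b(x)-b_Q|^m|g(x)|\mu(x)\,dx\big)$. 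Then, with $1/p-1/q=\beta/n$, I would apply the generalized Hölder inequality in the $Q$-average in the two-parameter form: the factor involving $f$ is controlled by $\|\nu^{-1/p}\|_{D,Q}\|f\nu^{1/p}\|_{\bar D,Q}$ and the factor involving $g$ by $\|(b-b_Q)^m\mu^{1/q}\|_{C,Q}\|g\mu^{1/q'}\|_{\bar C,Q}$. The testing-condition hypothesis then collapses the product $|Q|^{\alpha/n+1/q-1/p}\|(b-b_Q)^m\mu^{1/q}\|_{C,Q}\|\nu^{-1/p}\|_{D,Q}$ to a constant, leaving $\sum_{Q\in\mathcal{S}}|E_Q|\,|Q|^{\beta/n}\|f\nu^{1/p}\|_{\bar D,Q}\|g\mu^{1/q'}\|_{\bar C,Q}$, which is dominated by $\int_{\mathbb{R}^n}M_{\beta,\bar D}(f\nu^{1/p})(x)\,M_{\bar C}(g\mu^{1/q'})(x)\,dx$ using the sparseness (the sets $E_Q$ are disjoint and $E_Q\subset Q$). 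Finally Hölder in $(q,q')$, the boundedness $M_{\beta,\bar D}:L^p\to L^q$ from \cite{CruzM} (valid since $\bar D\in B_{p,q}$) together with $M_{\bar C}:L^{q'}\to L^{q'}$ (valid since $\bar C\in B_{q'}$) give $\|(T_{\mathcal{S},\alpha}^{b,m})^\ast f\|_{L^q(\mu)}\lesssim\|f\|_{L^p(\nu)}$.

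For the converse, I would test \eqref{2.3} against a function localized to a fixed $Q\in\mathcal{S}$. The natural choice is $f=\nu^{-p'/p}\chi_Q$, so that $|Q|^{-1}\int_Q|f|=|Q|^{-1}\int_Q\nu^{-p'/p}$ and hence, for $x\in Q$, $(T_{\mathcal{S},\alpha}^{b,m})^\ast f(x)\ge |Q|^{\alpha/n-1}|b(x)-b_Q|^m\int_Q\nu(y)^{-p'/p}\,dy$. Raising to the $q$-th power, integrating against $\mu$ over $Q$, and taking $q$-th roots gives a lower bound $|Q|^{\alpha/n-1}\big(\int_Q|b(x)-b_Q|^{mq}\mu(x)\,dx\big)^{1/q}\int_Q\nu(y)^{-p'/p}\,dy$; the right-hand side of \eqref{2.3} is $C\big(\int_Q\nu^{-p'/p}\big)^{1/p}$. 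Rearranging (dividing by $\big(\int_Q\nu^{-p'/p}\big)^{1/p}$ and inserting the appropriate powers of $|Q|$) yields exactly $\sup_{Q\in\mathcal S}|Q|^{\alpha/n+1/q-1/p}\|(b-b_Q)^m\mu^{1/q}\|_{q,Q}\|\nu^{-1/p}\|_{p',Q}<\infty$, since $\|(b-b_Q)^m\mu^{1/q}\|_{q,Q}=\big(|Q|^{-1}\int_Q|b-b_Q|^{mq}\mu\big)^{1/q}$ and $\|\nu^{-1/p}\|_{p',Q}=\big(|Q|^{-1}\int_Q\nu^{-p'/p}\big)^{1/p'}$.

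The only genuine point requiring care — as opposed to bookkeeping — is the bookkeeping of the exponents of $|Q|$ when one combines the four Orlicz-norm factors with the volume prefactor $|Q|^{\alpha/n+1}$ coming from $|Q|^{\alpha/n}\cdot|Q|^{-1}\cdot|Q|^{-1}\cdot|Q|$ (the two averaging denominators and the one extra $|Q|$ from writing the $g\mu$-average against Lebesgue measure); one must verify that after absorbing the testing constant the leftover power of $|Q|$ is exactly $|Q|^{\beta/n}$ with $\beta=n(1/p-1/q)$, which is what makes $M_{\beta,\bar D}$ — and not some other fractional maximal operator — the right object. This is identical in structure to the computation already carried out in Proposition \ref{pro2.5}, so I do not expect any real obstacle; the proof is obtained from that one by the substitution $\mu\leftrightarrow\nu$, $A\leftrightarrow D$, $B\leftrightarrow C$ together with moving the oscillation weight to the dual side. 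For this reason the authors are justified in omitting the details.
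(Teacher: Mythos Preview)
Your proposal is correct and is exactly what the paper intends: the authors omit the proof entirely, stating only that it is obtained ``similarly'' to Proposition~\ref{pro2.5}, and your argument is precisely that symmetric adaptation---duality, generalized H\"older with the oscillation now paired against $g\mu^{1/q'}$ via $C$, collapse of the bump constant leaving $|Q|^{\beta/n}$, and the maximal bounds $M_{\beta,\bar D}:L^p\to L^q$, $M_{\bar C}:L^{q'}\to L^{q'}$; the converse via the test function $f=\nu^{-p'/p}\chi_Q$ is likewise the exact dual of the paper's choice $f=|b-b_Q|^{m(p'-1)}\nu^{-p'/p}\chi_Q$ in Proposition~\ref{pro2.5}. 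Your exponent bookkeeping is correct and nothing further is needed.
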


\begin{proof}[Proofs of Theorems \ref{theorem1.1} and \ref{theorem1.2}]
Theorem \ref{theorem1.1} follows from Propositions \ref{pro2.5} and \ref{pro2.6}, and Theorem \ref{theorem1.2} follows from Lemma \ref{lemma2.4} and Theorem \ref{theorem1.1}.
\end{proof}

Next, we prove Theorem \ref{theorem1.3}. We first recall the following lemma.

\begin{lemma}{\rm(cf. \cite{CruMP})}\label{general Holder}
Let $A,B$ be continuous and strictly increasing functions on $[0,\infty)$ and $C$ be Young function that satisfies $A^{-1}(t)B^{-1}(t)\lesssim C^{-1}(t)$ for $t$ large. Then
$$\|fg\|_{C,Q}\lesssim\|f\|_{A,Q}\|g\|_{B,Q}.$$
\end{lemma}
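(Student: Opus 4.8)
The plan is to reduce to a normalized situation by homogeneity of the Luxemburg functional, then establish the usual O'Neil-type pointwise inequality and integrate. First I would note that $\|\cdot\|_{A,Q}$, $\|\cdot\|_{B,Q}$ and $\|\cdot\|_{C,Q}$ are positively homogeneous (this uses only that $A,B,C$ are increasing and vanish at the origin, not convexity), so if $\|f\|_{A,Q}$ or $\|g\|_{B,Q}$ equals $0$ or $\infty$ there is nothing to prove, and otherwise we may rescale $f$ and $g$ so that $\|f\|_{A,Q}=\|g\|_{B,Q}=1$; continuity of $A$ and $B$ together with Fatou's lemma then give $\frac{1}{|Q|}\int_Q A(|f|)\,dx\le1$ and $\frac{1}{|Q|}\int_Q B(|g|)\,dx\le1$. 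It then suffices to exhibit a constant $\Lambda$, depending only on $A,B,C$, with $\frac{1}{|Q|}\int_Q C(|fg|/\Lambda)\,dx\le1$.

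The core is the pointwise claim that there are $c\ge1$ and $K\ge0$, depending only on $A,B,C$, with $C(su/c)\le A(s)+B(u)+K$ for all $s,u\ge0$. To obtain it, write the hypothesis as $A^{-1}(t)B^{-1}(t)\le c\,C^{-1}(t)$ for $t\ge t_0$ (the inverses $A^{-1},B^{-1}$ being well defined and increasing since $A,B$ are continuous and strictly increasing), fix $s,u\ge0$, set $a=A(s)$ and $b=B(u)$, and assume by symmetry $a\le b$. If $b\ge t_0$, monotonicity of $A^{-1}$ gives $su=A^{-1}(a)B^{-1}(b)\le A^{-1}(b)B^{-1}(b)\le c\,C^{-1}(b)$, so applying the increasing function $C$ yields $C(su/c)\le b=B(u)\le A(s)+B(u)$; if $b<t_0$, then $s\le A^{-1}(t_0)$ and $u\le B^{-1}(t_0)$, so $C(su/c)\le C(A^{-1}(t_0)B^{-1}(t_0)/c)=:K$. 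In both cases $C(su/c)\le A(s)+B(u)+K$, proving the claim. Feeding $s=|f(x)|$, $u=|g(x)|$ into it and averaging over $Q$ gives $\frac{1}{|Q|}\int_Q C(|fg|/c)\,dx\le K+2$; a final appeal to convexity of $C$ (namely $C(x/(c(K+2)))\le\frac{1}{K+2}C(x/c)$) upgrades this to $\frac{1}{|Q|}\int_Q C(|fg|/(c(K+2)))\,dx\le1$, i.e.\ $\|fg\|_{C,Q}\le c(K+2)$. Undoing the normalization yields $\|fg\|_{C,Q}\le c(K+2)\|f\|_{A,Q}\|g\|_{B,Q}$, which is the assertion.

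There is no genuine obstacle: the statement is classical --- it is the Orlicz-space extension of H\"older's inequality, essentially O'Neil's lemma, and is covered by the reference cited with it. The only care required is bookkeeping: turning ``$A^{-1}(t)B^{-1}(t)\lesssim C^{-1}(t)$ for large $t$'' into an inequality valid for \emph{all} $s,u\ge0$, which is exactly where the additive error $K$ (absorbing the small-argument regime) and the dilations by $c$ and by $c(K+2)$ --- the latter using convexity of $C$ --- enter; and justifying that the infimum defining $\|f\|_{A,Q}$ is realized in the ``$\le1$'' sense, which is where continuity of $A$ is used.
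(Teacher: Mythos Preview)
The paper does not prove this lemma but simply quotes it from \cite{CruMP}, where this generalized H\"older inequality for Orlicz averages is standard; your argument is correct and is precisely the classical one --- normalize by homogeneity, establish the pointwise O'Neil-type bound $C(su/c)\le A(s)+B(u)+K$ by a case split on whether $\max\{A(s),B(u)\}$ exceeds the threshold $t_0$, integrate, and use convexity of the Young function $C$ to absorb the additive constant $K+2$ into a dilation. There is nothing further to compare.
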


\begin{proof}[Proof of Theorem \ref{theorem1.3}]
Denote $\Phi_m(t)=\Phi(t^{1/m})$. Since $B,X,\Phi$ satisfy
$$\Phi^{-1}(t)^mX^{-1}(t)\lesssim B^{-1}(t)$$ for $t$ large, by Lemma \ref{general Holder}, we have that
\begin{align*}
\|(b-b_Q)^m\nu^{-1/p}\|_{B,Q}&\lesssim\|(b-b_Q)^m\|_{\Phi_m,Q}\|\nu^{-1/p}\|_{X,Q}\\
&=\|(b-b_Q)\|_{\Phi,Q}^m\|\nu^{-1/p}\|_{X,Q}.
\end{align*}
Therefore,
\begin{align*}
&\sup_Q|Q|^{{\alpha}/{n}+{1}/{q}-{1}/{p}}\|\mu^{1/q}\|_{A,Q}
\|(b-b_Q)^m\nu^{-1/p}\|_{B,Q}\\
&\quad\lesssim\|b\|_{Osc(\Phi)}^m\sup_Q|Q|^{{\alpha}/{n}+{1}/{q}-{1}/{p}}
\|\mu^{1/q}\|_{A,Q}\|\nu^{-1/p}\|_{X,Q}<\infty.
\end{align*}
By Proposition \ref{pro2.5}, we get that $$\|T_{\mathcal{S},\alpha}^{b,m} f\|_{L^q(\mu)}\leq C\|f\|_{L^p(\nu)}.$$ Similarly, we have
\begin{align*}
&\sup_Q|Q|^{{\alpha}/{n}+{1}/{q}-{1}/{p}}\|(b-b_Q)^m\mu^{1/q}
\|_{C,Q}\|\nu^{-1/p}\|_{D,Q}\\
&\quad\lesssim\|b\|_{Osc(\Phi)}^m\sup_Q|Q|^{{\alpha}/{n}+{1}/{q}-{1}/{p}}
\|\mu^{1/q}\|_{Y,Q}\|\nu^{-1/p}\|_{D,Q}<\infty.
\end{align*}
This, together with Proposition \ref{pro2.6}, deduces that $$\|(T_{\mathcal{S},\alpha}^{b,m})^\ast f\|_{L^q(\mu)}\leq C\|f\|_{L^p(\nu)}.$$ Summing up the above estimates with Lemma \ref{lemma2.4}, we obtain the conclusion of Theorem \ref{theorem1.3}.
\end{proof}

To prove Corollary \ref{corollary1.4}, we need to recall the following fact. For $\varphi(t)=t^p(\log(e+t))^q$ with $p>1$ and $q\in\mathbb{R}$, Cruz-Uribe et al. \cite{CruPe00} showed that
\begin{align}\label{Young function}
\varphi^{-1}(t)\sim\frac{t^{1/p}}{(\log(e+t))^{q/p}},\quad \bar{\varphi}(t)=\frac{t^{p'}}
{(\log(e+t))^{p'q/p}}.
\end{align}
Now, we give the proof of Corollary \ref{corollary1.4}.

\begin{proof}[Proof of Corollary \ref{corollary1.4}]
To prove this corollary, we need only to choose some Young functions that satisfy the conditions of Theorem \ref{theorem1.3}. For some $\delta>0$, choose
$$X(t)=t^{p'}[\log(e+t)]^{(m+1)p'-1+\delta},~ Y(t)=t^q[\log(e+t)]^{(m+1)q-1+\delta},$$
$$B(t)=t^{p'}[\log(e+t)]^{p'-1+\delta},~C(t)=t^q[\log(e+t)]^{q-1+\delta},~\Phi(t)=e^t-1.$$
It is not hard to check that
$$\bar{B}(t)\sim\frac{t^{p}}{[\log(e+t)]^{1+{p\delta}/{p'}}}\in B_{p}\subset B_{p,q},~ \bar{C}(t)\sim\frac{t^{q'}}{[\log(e+t)]^{1+{q'\delta}/{q}}}\in B_{q'}$$
and $\Phi^{-1}(t)=\log(e+t)$. By \eqref{Young function}, we have that
$$X^{-1}(t)\sim\frac{t^{1/p'}}{[\log(e+t)]^{m+1/p+\delta/p'}},~
Y^{-1}(t)\sim\frac{t^{1/q}}{[\log(e+t)]^{m+1/q'+\delta/q}},$$
$$B^{-1}(t)\sim\frac{t^{1/p'}}{[\log(e+t)]^{1/p+\delta/p'}},~
C^{-1}(t)\sim\frac{t^{1/q}}{[\log(e+t)]^{1/q'+\delta/q}}.$$
Then
$$X^{-1}(t)\Phi^{-1}(t)^m\sim\frac{t^{1/p'}}{[\log(e+t)]^{m+1/p+\delta/p'}}[\log(e+t)]^m\sim B^{-1}(t),$$
$$Y^{-1}(t)\Phi^{-1}(t)^m\sim\frac{t^{1/q}}{[\log(e+t)]^{m+1/q'+\delta/q}}[\log(e+t)]^m\sim C^{-1}(t).$$
Finally, by the John-Nirenberg inequality and $t\lesssim \Phi(t)$, we get $\|b\|_{BMO(\mathbb{R}^n)}\sim\|b\|_{Osc(\Phi)}$. Thus, Theorem \ref{theorem1.3} implies Corollary \ref{corollary1.4}.
\end{proof}

\begin{proof}[Proof of Corollary \ref{corollary1.5}]
Choosing $A(t)=t^q[\log(e+t)]^{q-1+\delta}$, $D(t)=t^{p'}[\log(e+t)]^{p'-1+\delta}$ in Corollary \ref{corollary1.4}. Then
$$\bar{A}(t)\sim\frac{t^{q'}}{[\log(e+t)]^{1+{q'\delta}/{q}}}\in B_{q'},~\bar{D}(t)\sim\frac{t^p}{[\log(e+t)]^{1+{p\delta}/{p'}}}\in B_{p}\subset B_{p,q}.$$
Hence, Corollary \ref{corollary1.5} directly follows from Corollary \ref{corollary1.4}.
\end{proof}

\section{Necessity of two weight inequalities for $I_\alpha^{b,m}$}
In this section, we give the proof of Theorem \ref{theorem1.7}. To prove Theorem \ref{theorem1.7}, we need the following two lemmas.

\begin{lemma}\label{lm3.1}
Let $K_\alpha(x,y)=\frac{1}{|x-y|^{n-\alpha}}$. Then for each $A\geq4$ and each ball $B:=B(y_0,r)$, there exists a disjoint ball $\tilde{B}:=B(x_0,r)$ with dist$(B,\tilde{B})\sim Ar$ satisfies $|K_\alpha(x_0,y_0)|=\frac{1}{A^{n-\alpha}r^{n-\alpha}}$, and for any $y\in B$ and $x\in\tilde{B}$, there holds
\begin{align*}
|K_\alpha(x,y)-K_\alpha(x_0,y_0)|\lesssim\frac{\epsilon_A}{A^{n-\alpha}r^{n-\alpha}},
\end{align*}
where $\epsilon_A\rightarrow0$ as $A\rightarrow\infty$.
\end{lemma}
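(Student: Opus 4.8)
The plan is to construct $\tilde B$ explicitly by translating $B$ along a fixed direction by a distance comparable to $Ar$, and then to estimate the kernel difference by a first-order Taylor/mean value argument. Fix a unit vector $e_1$ and set $x_0 = y_0 + A r\, e_1$, so that $|x_0 - y_0| = Ar$ and hence $|K_\alpha(x_0,y_0)| = (Ar)^{-(n-\alpha)} = A^{-(n-\alpha)} r^{-(n-\alpha)}$, as required. Since $A \ge 4$, the balls $B = B(y_0,r)$ and $\tilde B = B(x_0,r)$ are disjoint, and for $x \in \tilde B$, $y \in B$ we have $\dist(B,\tilde B) = (A-2)r \sim Ar$ and, more importantly, $|x - y| \sim Ar$ with comparability constants independent of $A$ (precisely $(A-2)r \le |x-y| \le (A+2)r$).

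The main step is the kernel difference estimate. Write $K_\alpha(x,y) - K_\alpha(x_0,y_0)$ and interpolate through the intermediate point: bound it by $|K_\alpha(x,y) - K_\alpha(x_0,y)| + |K_\alpha(x_0,y) - K_\alpha(x_0,y_0)|$. For each term apply the gradient bound $|\nabla_x K_\alpha(x,y)| = (n-\alpha)|x-y|^{-(n-\alpha)-1} \lesssim |x-y|^{-(n-\alpha+1)}$, valid along the whole segment joining the relevant points since that segment stays in the region where $|x-y| \sim Ar$ (here one uses $A \ge 4$ to keep the segments away from the diagonal). Since $|x - x_0| \le 2r$ and $|y - y_0| \le 2r$, the mean value theorem gives
\begin{align*}
|K_\alpha(x,y) - K_\alpha(x_0,y_0)| \lesssim \frac{r}{(Ar)^{n-\alpha+1}} = \frac{1}{A}\cdot\frac{1}{A^{n-\alpha}r^{n-\alpha}},
\end{align*}
so the claim holds with $\epsilon_A = C/A \to 0$ as $A \to \infty$.

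I do not expect any serious obstacle here; the only point requiring a little care is making sure the line segments along which the mean value theorem is applied remain in the good region $|x-y| \sim Ar$, which is exactly what the hypothesis $A \ge 4$ (and the radius of both balls being the same $r$, small compared to $Ar$) guarantees. A cleaner alternative that avoids segment bookkeeping is to note that the map $(x,y) \mapsto K_\alpha(x,y)$ is smooth on $\{|x-y| \ge 3r\}$ with the stated derivative bounds, that this set is convex in the pair variable along the relevant directions, and to apply the fundamental theorem of calculus directly; either way the estimate is routine once the geometry is fixed.
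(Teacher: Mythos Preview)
Your proposal is correct and follows essentially the same approach as the paper: translate the center by $Ar$ along a fixed unit direction, then split $K_\alpha(x,y)-K_\alpha(x_0,y_0)$ via the intermediate point $(x_0,y)$ and apply the mean value theorem with the gradient bound $|\nabla K_\alpha|\lesssim |x-y|^{-(n-\alpha+1)}$, arriving at $\epsilon_A\sim 1/A$. The only difference is that you spell out the segment geometry a bit more carefully than the paper does.
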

\begin{proof}
Fix a ball $B=B(y_0,r)$ and $A\geq4$, take $x_0=y_0+Ar\theta_0$, where $\theta_0\in\mathbb{S}^{n-1}$. Let $\tilde{B}:=B(x_0,r)$, it is easy to see that dist$(B,\tilde{B})\sim Ar$ and $K_\alpha(x_0,y_0)=\frac{1}{|x_0-y_0|^{n-\alpha}}=\frac{1}{A^{n-\alpha}r^{n-\alpha}}$. For any $y\in B$ and $x\in\tilde{B}$, by the mean value theorem, we have
\begin{align*}
|K_\alpha(x,y)-K_\alpha(x_0,y_0)|&\leq|K_\alpha(x,y)-K_\alpha(x_0,y)|+|K_\alpha(x_0,y)-K_\alpha(x_0,y_0)|\\
&\lesssim\frac{|x-x_0|}{|x_0-y|^{n-\alpha+1}}\leq\frac{1/A}{(Ar)^{n-\alpha}}
=:\frac{\epsilon_A}{(Ar)^{n-\alpha}}.
\end{align*}
\end{proof}

\begin{lemma}{\rm(cf. \cite{LerOR})}\label{lm3.2}
Assume that $f\in BMO(\mathbb{R}^n)$, and let $Q$ be a cube such that $f_Q=0$. Then there exists a function $\varphi$ such that $\varphi=f$ on $Q$, $\varphi=0$ on $\mathbb{R}^n\backslash2Q$ and $\|\varphi\|_{BMO(\mathbb{R}^n)}\lesssim\|f\|_{BMO(\mathbb{R}^n)}$.
\end{lemma}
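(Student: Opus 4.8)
The plan is to construct $\varphi$ by truncating $f$ with a Lipschitz cutoff adapted to $Q$. Let $\ell(Q)$ be the side length of $Q$, and fix $\chi$ with $\chi\equiv 1$ on $Q$, $\chi\equiv 0$ off $2Q$, $0\le\chi\le 1$, and $|\chi(x)-\chi(y)|\lesssim|x-y|/\ell(Q)$ for all $x,y$. Put $\varphi:=f\chi$. Then $\varphi=f$ on $Q$ and $\varphi=0$ on $\mathbb{R}^n\setminus 2Q$ are immediate, so the whole task is the bound $\|\varphi\|_{BMO(\mathbb{R}^n)}\lesssim\|f\|_{BMO(\mathbb{R}^n)}$. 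I would record first the two standard consequences of $f_Q=0$ that will be used: $|f_{2Q}|\lesssim\|f\|_{BMO}$ (one doubling step), whence $\frac{1}{|2Q|}\int_{2Q}|f|\lesssim\|f\|_{BMO}$; and, for any cube $R\subset 4Q$ with $\ell(R)<\ell(Q)$, the logarithmic telescoping estimate $|f_R|=|f_R-f_Q|\lesssim\bigl(1+\log(\ell(Q)/\ell(R))\bigr)\|f\|_{BMO}$.

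To estimate $\|\varphi\|_{BMO}$ it suffices to find, for each cube $R$, a constant $c_R$ with $\frac{1}{|R|}\int_R|\varphi-c_R|\,dx\lesssim\|f\|_{BMO}$, and I would distinguish three cases according to the size of $R$ and its position relative to $Q$. If $R\cap 2Q=\emptyset$, then $\varphi\equiv 0$ on $R$ and there is nothing to do. If $R$ meets $2Q$ and $\ell(R)\ge\ell(Q)$, then $|2Q|\lesssim|R|$, so with $c_R=0$ one gets $\frac{1}{|R|}\int_R|\varphi|\le\frac{1}{|R|}\int_{2Q}|f|\lesssim\frac{|2Q|}{|R|}\|f\|_{BMO}\lesssim\|f\|_{BMO}$.

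The case that carries all the weight is $R$ meeting $2Q$ with $\ell(R)<\ell(Q)$; note that then $R\subset 4Q$. Here I would take $c_R=\varphi_R$ and use $\frac{1}{|R|}\int_R|\varphi-\varphi_R|\le\frac{1}{|R|^2}\int_R\int_R|\varphi(x)-\varphi(y)|\,dx\,dy$ together with the product estimate $|\varphi(x)-\varphi(y)|\le|f(x)|\,|\chi(x)-\chi(y)|+|\chi(y)|\,|f(x)-f(y)|\lesssim\frac{|x-y|}{\ell(Q)}|f(x)|+|f(x)-f(y)|$. Since $|x-y|\lesssim\ell(R)$ for $x,y\in R$, the $|f(x)-f(y)|$ part integrates to $\lesssim\frac{1}{|R|}\int_R|f-f_R|\le\|f\|_{BMO}$, while the other part is controlled by $\frac{\ell(R)}{\ell(Q)}\cdot\frac{1}{|R|}\int_R|f|\le\frac{\ell(R)}{\ell(Q)}\bigl(\|f\|_{BMO}+|f_R|\bigr)$.

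Finally, invoking the logarithmic telescoping estimate for $|f_R|$ recorded above, the last quantity is $\lesssim\frac{\ell(R)}{\ell(Q)}\bigl(1+\log(\ell(Q)/\ell(R))\bigr)\|f\|_{BMO}$, which is $\lesssim\|f\|_{BMO}$ because $t\mapsto t\,(1+\log(1/t))$ is bounded on $(0,1]$. This is exactly where the only real obstacle lies: the averages of $f$ over small subcubes of $Q$ may genuinely grow like $\log(\ell(Q)/\ell(R))$, and the argument succeeds only because the Lipschitz decay of the cutoff supplies the factor $\ell(R)/\ell(Q)$ that absorbs this growth; everything else is routine bookkeeping. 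One could instead build $\varphi$ by hand on the annulus $2Q\setminus Q$ using a Whitney decomposition, but the cutoff argument above is the most economical route.
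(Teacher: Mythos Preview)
Your argument is correct. The paper does not supply its own proof of this lemma; it simply quotes the statement from \cite{LerOR}, so there is nothing to compare against at the level of the present paper. What you have written is a clean, self-contained proof via the Lipschitz cutoff $\varphi=f\chi$, and each of the three cases is handled properly. The only place where anything nontrivial happens is your Case~3 (small $R$ meeting $2Q$), and there your identification of the issue is exactly right: the average $|f_R|$ can grow like $\log(\ell(Q)/\ell(R))$, but the Lipschitz factor $\ell(R)/\ell(Q)$ from the cutoff kills it since $t(1+\log(1/t))$ is bounded on $(0,1]$. One minor remark: in the telescoping step you write $|f_R|=|f_R-f_Q|$ for $R\subset 4Q$; strictly speaking the chain of doubled cubes may exit $Q$, but it stays inside a fixed dilate of $Q$, and comparing $f$ over $Q$, $4Q$, and the last cube in the chain costs only $O(\|f\|_{BMO})$, so the estimate stands.
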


\begin{proof}[Proof of Theorem \ref{theorem1.7}]
For any cube $Q\subset\mathbb{R}^n$, we define
\begin{align*}
g(x)=\log^+\Big(\frac{M(\nu^{1-p'}\chi_Q)(x)}{(\nu^{1-p'})_Q}\Big).
\end{align*}
It is well known that $g\in BMO(\mathbb{R}^n)$. Moreover, the Kolmogogov inquality yields that
\begin{align*}
\int_Q(M(f\chi_Q))^\delta\lesssim\Big(\frac{1}{|Q|}\int_Q|f|\Big)^\delta|Q|,~0<\delta<1.
\end{align*}
We then have $g_Q\lesssim1$. According to Lemma \ref{lm3.2}, there is a function $\varphi$ satisfying $\varphi=g-g_Q$ on $Q$, $\varphi=0$ outside $2Q$ and $\|\varphi\|_{BMO(\mathbb{R}^n)}\lesssim1$. Choosing a ball $B$ such that the centre of $B$ is the same as cube $Q$ and $r=diam~ Q$. Then by Lemma \ref{lm3.1}, there is a ball $\tilde{B}$ such that the centre of $\tilde{B}$ is the same as cube $B$ and dist$(B,\tilde{B})=Ar$, where $A\geq4$ will be determined later.

Now, we return to prove our theorem. By duality, we find that the condition
\begin{align*}
\|I_\alpha^{b,m}f\|_{L^{q,\infty}(\mu)}\lesssim\|b\|_{BMO(\mathbb{R}^n)}^m\|f\|_{L^p(\nu)}
\end{align*}
is equivalent to the condition
\begin{align}\label{duality}
\|(I_\alpha^{b,m})^\ast f\|_{L^{p'}(\nu^{1-p'})}\lesssim\|b\|_{BMO(\mathbb{R}^n)}^m\|f/\mu\|_{L^{q',1}(\mu)}.
\end{align}
One can check that $(I_\alpha^{b,m})^\ast=(-1)^mI_\alpha^{b,m}$, hence, we can still deal with \eqref{duality} by considering $I_\alpha^{b,m}$. Let $b=\varphi$, then for $x\in B$ and a non-negative function $f$,
\begin{align*}
I_\alpha^{b,m}(f\chi_{\tilde{B}})(x)=\int_{\tilde{B}}(b(x)-b(y))^m\frac{f(y)}{|x-y|^{n-\alpha}}dy
=\varphi(x)^m\int_{\tilde{B}}\frac{f(y)}{|x-y|^{n-\alpha}}dy.
\end{align*}
By \eqref{duality}, we immediately get that
\begin{align*}
\Big(\int_B\Big(\int_{\tilde{B}}\frac{f(y)}{|x-y|^{n-\alpha}}dy\Big)^{p'}|\varphi(x)|^{mp'}
\nu(x)^{1-p'}dx\Big)^{1/p'}\lesssim\|f\chi_{\tilde{B}}/\mu\|_{L^{q',1}(\mu)}.
\end{align*}
This, combining with Lemma \ref{lm3.1}, yields that
\begin{align*}
&\frac{1}{A^{n-\alpha}}\Big(\int_B|\varphi(x)|^{mp'}\nu(x)^{1-p'}dx\Big)^{1/p'}f_{\tilde{B}}\\
&\quad=\frac{r^{n-\alpha}}{(Ar)^{n-\alpha}}\Big(\int_B|\varphi(x)|^{mp'}\nu(x)^{1-p'}dx\Big)^{1/p'}
f_{\tilde{B}}\\
&\quad=r^{-\alpha}\Big(\int_B\Big(\int_{\tilde{B}}\frac{f(y)}{|x_0-y_0|^{n-\alpha}}dy\Big)^{p'}
|\varphi(x)|^{mp'}\nu(x)^{1-p'}dx\Big)^{1/p'}\\
&\quad\leq r^{-\alpha}\Big(\int_B\Big(\int_{\tilde{B}}\Big|\frac{1}{|x_0-y_0|^{n-\alpha}}-
\frac{1}{|x-y|^{n-\alpha}}\Big|f(y)dy\Big)^{p'}|\varphi(x)|^{mp'}\nu(x)^{1-p'}dx\Big)^{1/p'}\\
&\qquad+r^{-\alpha}\Big(\int_B\Big(\int_{\tilde{B}}
\frac{1}{|x-y|^{n-\alpha}}f(y)dy\Big)^{p'}|\varphi(x)|^{mp'}\nu(x)^{1-p'}dx\Big)^{1/p'}\\
&\quad\lesssim\frac{\epsilon_A}{A^{n-\alpha}}\Big(\int_B|\varphi(x)|^{mp'}\nu(x)^{1-p'}dx\Big)^{1/p'}
f_{\tilde{B}}+r^{-\alpha}\|f\chi_{\tilde{B}}/\mu\|_{L^{q',1}(\mu)}.
\end{align*}
Choosing $A$ large enough, we have
\begin{align*}
\Big(\int_B|\varphi(x)|^{mp'}\nu(x)^{1-p'}dx\Big)^{1/p'}f_{\tilde{B}}\lesssim
r^{-\alpha}\|f\chi_{\tilde{B}}/\mu\|_{L^{q',1}(\mu)}.
\end{align*}
Taking $f=\mu$ and using the fact that $\|\chi_{\tilde{B}}\|_{L^{q',1}(\mu)}\sim(\int_{\tilde{B}}\mu)^{1/q'}$, we obtain
\begin{align}\label{3.2}
r^\alpha|\tilde{B}|^{-1}\Big(\int_B|\varphi(x)|^{mp'}\nu(x)^{1-p'}dx\Big)^{1/p'}
\Big(\int_{\tilde{B}}\mu(x)dx\Big)^{1/q}\lesssim1.
\end{align}
Similarly, let $b=\chi_B$, following the arguments as \eqref{3.2}, we get that
\begin{align}\label{3.3}
r^\alpha|\tilde{B}|^{-1}\Big(\int_B\nu(x)^{1-p'}dx\Big)^{1/p'}
\Big(\int_{\tilde{B}}\mu(x)dx\Big)^{1/q}\lesssim1.
\end{align}
Observe that $|\tilde{B}|\sim|Q|$ and $Q\subset\theta\tilde{B}$, where $\theta$ depends only on $A$ and $n$. Combing with these facts and the doubling property of $\mu$, we can replace \eqref{3.2} and \eqref{3.3} by
\begin{align*}
r^\alpha|Q|^{-1}\Big(\int_Q|g(x)-g_Q|^{mp'}\nu(x)^{1-p'}dx\Big)^{1/p'}
\Big(\int_{Q}\mu(x)dx\Big)^{1/q}\lesssim1
\end{align*}
and
\begin{align*}
r^\alpha|Q|^{-1}\Big(\int_Q\nu(x)^{1-p'}dx\Big)^{1/p'}
\Big(\int_{Q}\mu(x)dx\Big)^{1/q}\lesssim1,
\end{align*}
respectively. Keeping in mind that $g_Q\lesssim1$, we finally have
\begin{align*}
&r^\alpha|Q|^{-1}\Big(\int_Qg(x)^{mp'}\nu(x)^{1-p'}dx\Big)^{1/p'}
\Big(\int_{Q}\mu(x)dx\Big)^{1/q}\\
&\quad\lesssim r^\alpha|Q|^{-1}\Big(\int_Q|g(x)-g_Q|^{mp'}\nu(x)^{1-p'}dx\Big)^{1/p'}
\Big(\int_{Q}\mu(x)dx\Big)^{1/q}\\
&\qquad+r^\alpha|Q|^{-1}\Big(\int_Q\nu(x)^{1-p'}dx\Big)^{1/p'}
\Big(\int_{Q}\mu(x)dx\Big)^{1/q}\lesssim1,
\end{align*}
which implies that
\begin{align*}
&\sup_Q|Q|^{\frac{\alpha}{n}+\frac{1}{q}-\frac{1}{p}}
\Big(\frac{1}{|Q|}\int_Q\mu(x)dx\Big)^{1/q}\\
&\quad\times\Big(\frac{1}{|Q|}\int_Q\nu(x)^{1-p'}\Big[\log\Big(\frac{v(x)^{1-p'}}
{(v(x)^{1-p'})_Q}+e\Big)\Big]^{mp'}dx\Big)^{1/p'}<\infty.
\end{align*}
Therefore, using the following fact proved in \cite{W},
$$\|f\|_{L(\log L)^\alpha,Q}\sim\frac{1}{|Q|}\int_Q|f(x)|[\log(|f(x)|/|f|_Q+e)]^\alpha dx,$$
we get the desired result. Theorem \ref{theorem1.7} is proved.
\end{proof}

\section{Converse to Bloom type estimate for $I_\alpha^{b,m}$}

This section is concerning with the proof of Theorem \ref{theorem1.8}. First, we recall and establish some lemmas, which are the keys in our arguments.

\begin{lemma}{\rm(cf. \cite{LerOR})}\label{lm3.3}
Let $\eta_1,\eta_2$ be the weights such that $\eta_1/\eta_2\not\in L^\infty$. Then there exists $b\in BMO_{\eta_1}\backslash BMO_{\eta_2}$.
\end{lemma}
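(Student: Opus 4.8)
The plan is to construct $b$ by hand, as a superposition of bump functions localised at a very rapidly shrinking sequence of cubes on which $\eta_1$ dominates $\eta_2$. It is convenient to note at the outset an equivalent reformulation: since each $BMO_{\eta_i}$ is complete modulo constants, if $BMO_{\eta_1}\subseteq BMO_{\eta_2}$ then the closed graph theorem forces $\|\cdot\|_{BMO_{\eta_2}}\lesssim\|\cdot\|_{BMO_{\eta_1}}$, so it would suffice to produce, for every constant $C$, a single function with $\|b\|_{BMO_{\eta_2}}>C\|b\|_{BMO_{\eta_1}}$; the construction below does exactly this, scale by scale, and then assembles the scales into one $b$.

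First I would pass from the pointwise hypothesis to a statement about cubes. Since $\eta_1/\eta_2\notin L^\infty$ and the weights are positive almost everywhere, for each $N>0$ the set $\{x:\eta_1(x)>N\eta_2(x)\}$ has positive measure, hence contains a point $x_N$ that is simultaneously a Lebesgue point of $\eta_1$ and of $\eta_2$, with $\eta_2(x_N)>0$ and $\eta_1(x_N)/\eta_2(x_N)>N$. Cubes $Q$ centred at $x_N$ of small enough side length then satisfy $\eta_1(Q)/\eta_2(Q)>N$ and carry $\eta_1,\eta_2$ comparable, in the $L^1$-average sense, to their values at $x_N$. In this way I fix a sequence of cubes $\{Q_k\}$ with $\ell(Q_k)\downarrow0$ and $\eta_1(Q_k)/\eta_2(Q_k)\to\infty$, each $\ell(Q_k)$ chosen at its stage small enough relative to the previously selected data; this freedom is used below to decouple the scales.

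Writing $\overline{\eta}_{Q}:=\eta(Q)/|Q|$, to each $Q_k$ I attach a bump $b_k$ supported in $Q_k$, equal to a height $h_k\sim\big(\overline{\eta_1}_{Q_k}\,\overline{\eta_2}_{Q_k}\big)^{1/2}$ on $\tfrac12Q_k$ and tapering to $0$ on $\partial Q_k$, chosen moreover \emph{adapted to} $\eta_1$ in the sense that its gradient is controlled pointwise by a multiple of $\eta_1$; set $b=\sum_k b_k$. Testing $BMO_{\eta_2}$ on $Q_k$ is the easy half: with $\ell(Q_k)$ decaying fast enough the bumps of the larger cubes are essentially constant on the smaller ones and $\sum_k h_k|Q_k|<\infty$, so $\int_{Q_k}|b-b_{Q_k}|\sim h_k|Q_k|$, whence
\[
\frac{1}{\eta_2(Q_k)}\int_{Q_k}|b-b_{Q_k}|\;\gtrsim\;\frac{h_k|Q_k|}{\eta_2(Q_k)}\;=\;\frac{h_k}{\overline{\eta_2}_{Q_k}}\;\sim\;\Big(\frac{\eta_1(Q_k)}{\eta_2(Q_k)}\Big)^{1/2}\;\to\;\infty
\]
as $k\to\infty$, and therefore $\|b\|_{BMO_{\eta_2}}=\infty$.

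The substance, and what I expect to be the main obstacle, is proving $\|b\|_{BMO_{\eta_1}}<\infty$. For an arbitrary cube $R$ one estimates $\tfrac1{\eta_1(R)}\int_R|b-b_R|\le\tfrac2{\eta_1(R)}\sum_k\int_R|b_k-(b_k)_R|$ and splits according to the size and position of $R$ relative to the $Q_k$: cubes much larger than every relevant $Q_k$ (using $\eta_1(R)\ge\eta_1(Q_1)$, $\int_R b_k\le h_k|Q_k|$ and $\sum_k h_k|Q_k|<\infty$), cubes comparable to some $Q_k$, and cubes contained in some $Q_k$; for the last two the adaptedness of $b_k$ to $\eta_1$, the average-flatness of $\eta_1$ on $Q_k$, and the choice of $h_k$ as the geometric mean of $\overline{\eta_1}_{Q_k}$ and $\overline{\eta_2}_{Q_k}$ conspire to bound each contribution by an absolute constant (the key point being $h_k/\overline{\eta_1}_{Q_k}\to0$), while the rapid decay of $\ell(Q_k)$ handles the summation over $k$. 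The reason this step is delicate is that $\eta_1$ is not assumed doubling: a careless bump on $Q_k$ need \emph{not} have $BMO_{\eta_1}$ seminorm comparable to its size on $Q_k$, since deep dips of $\eta_1$ inside $Q_k$, or on cubes straddling $\partial Q_k$, can inflate it; this is precisely why the bump profiles must be tuned to the pointwise size of $\eta_1$ and why the cubes $Q_k$ may have to be chosen by a Calder\'{o}n--Zygmund-type stopping procedure applied to $\eta_1$ to guarantee sufficient regularity. Granting this, $b\in BMO_{\eta_1}\setminus BMO_{\eta_2}$; the remaining estimates are routine and the full details are carried out in \cite{LerOR}.
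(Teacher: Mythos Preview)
The paper does not prove this lemma at all: it is stated with the citation ``(cf.\ \cite{LerOR})'' and used as a black box in the proof of Theorem~\ref{theorem1.8}. There is therefore no proof in the paper to compare your proposal to.

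As for your sketch itself: the opening closed-graph reduction is clean and correct, and the idea of building $b$ as a superposition of bumps on cubes $Q_k$ with $\eta_1(Q_k)/\eta_2(Q_k)\to\infty$, scaled so that the $BMO_{\eta_2}$ oscillations blow up while the $BMO_{\eta_1}$ oscillations stay bounded, is the natural construction. You have also correctly located the genuine difficulty, namely bounding $\|b\|_{BMO_{\eta_1}}$ over \emph{all} cubes $R$ when $\eta_1$ carries no doubling or regularity assumption. However, your resolution of that difficulty is not a proof: the requirement that each bump have ``gradient controlled pointwise by a multiple of $\eta_1$'' while simultaneously reaching the prescribed height $h_k$ is not obviously compatible (if $\eta_1$ is tiny on a large portion of $Q_k$ there is no room to climb), and the phrase ``Granting this'' followed by a deferral to \cite{LerOR} is an admission that the argument is incomplete. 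Since the paper itself simply cites \cite{LerOR}, your proposal is in the end doing the same thing, preceded by a heuristic outline.
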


\begin{lemma}\label{lm3.4}
Let $\lambda,\mu$ be arbitrary weights satisfy \eqref{boundedness} and $p,q,m,\alpha$ be given in Theorem \ref{theorem1.8}. Then for each ball $B:=B(y_0,r)$, there exists a disjoint ball $\tilde{B}:=B(x_0,r)$ with dist$(B,\tilde{B})\sim Ar$ such that for any non-negative measurable function $f$,
\begin{align*}
\Big(\int_{\tilde{B}}\eta(x)^{mq}\lambda(x)^qdx\Big)^{1/q}f_B\lesssim r^{-\alpha}\Big(\int_Bf(x)^p\mu(x)^pdx\Big)^{1/p}.
\end{align*}
\end{lemma}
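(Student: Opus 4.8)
The plan is to mimic the construction in the proof of Theorem~\ref{theorem1.7}, but now exploiting the \emph{Bloom type} estimate \eqref{boundedness} with a well-chosen $BMO_\eta$ function supported near $B$. First I would fix a ball $B=B(y_0,r)$ and, following Lemma~\ref{lm3.1}, select the companion ball $\tilde B=B(x_0,r)$ with $\operatorname{dist}(B,\tilde B)\sim Ar$ and $K_\alpha(x_0,y_0)=(Ar)^{-(n-\alpha)}$, where $A\ge 4$ is a large parameter to be chosen at the end. On these separated balls, for a function $b$ supported in $B$ one has, for $x\in\tilde B$,
\begin{align*}
I_\alpha^{b,m}(f\chi_B)(x)=(-1)^m b(x)^m\cdots\quad\text{(no, }b\text{ supported in }B\text{)},
\end{align*}
so instead I would put $b$ supported in $\tilde B$ and test against $f\chi_B$; then for $x\in B$,
\begin{align*}
I_\alpha^{b,m}(f\chi_B)(x)=\int_B (b(x)-b(y))^m\frac{f(y)}{|x-y|^{n-\alpha}}\,dy
\end{align*}
is not what we want either. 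The correct choice, parallel to Theorem~\ref{theorem1.7}, is to take $b$ to be (a $BMO$-extension of) a function that equals something like $\eta^{?}$-adapted data on $B$ and vanishes off $2B$, evaluate $I_\alpha^{b,m}(f\chi_{\tilde B})(x)$ for $x\in B$, which equals $b(x)^m\int_{\tilde B}f(y)|x-y|^{-(n-\alpha)}\,dy$, and then apply \eqref{boundedness}.

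Concretely, I would argue as follows. Given $B$, set $b$ on $B$ so that $\|b\|_{BMO_\eta}\lesssim 1$ with $b$ supported in $2B$ (this is where a Bloom analogue of Lemma~\ref{lm3.2} is needed — see below). Then \eqref{boundedness} gives
\begin{align*}
\Big(\int_B |b(x)|^{mq}\Big(\int_{\tilde B}\frac{f(y)}{|x-y|^{n-\alpha}}\,dy\Big)^{q}\lambda(x)^q\,dx\Big)^{1/q}\lesssim \Big(\int_{\tilde B}f(y)^p\mu(y)^p\,dy\Big)^{1/p}.
\end{align*}
Using Lemma~\ref{lm3.1} to replace the kernel $|x-y|^{-(n-\alpha)}$ by the constant $(Ar)^{-(n-\alpha)}$ up to an error $\epsilon_A (Ar)^{-(n-\alpha)}$, the left side is, up to $(1+\epsilon_A)$ factors, comparable to
\begin{align*}
(Ar)^{-(n-\alpha)}\Big(\int_B |b(x)|^{mq}\lambda(x)^q\,dx\Big)^{1/q}\int_{\tilde B}f(y)\,dy,
\end{align*}
and absorbing the $\epsilon_A$ error into the left side for $A$ large (exactly as in \eqref{3.2}) yields
\begin{align*}
r^{\alpha}|\tilde B|^{-1}\Big(\int_B |b(x)|^{mq}\lambda(x)^q\,dx\Big)^{1/q} f_{\tilde B}\lesssim |\tilde B|^{-1}\Big(\int_{\tilde B}f^p\mu^p\Big)^{1/p}|\tilde B|^{1-1/p'}\cdots
\end{align*}
after which swapping the roles of $B$ and $\tilde B$ (the construction is symmetric, so one also gets the version with $\eta^{mq}\lambda^q$ integrated over $\tilde B$ and $f$ averaged over $B$) gives precisely the claimed inequality
\begin{align*}
\Big(\int_{\tilde B}\eta(x)^{mq}\lambda(x)^q\,dx\Big)^{1/q} f_B\lesssim r^{-\alpha}\Big(\int_B f(x)^p\mu(x)^p\,dx\Big)^{1/p}.
\end{align*}
The point is that the natural $BMO_\eta$ test function built from a bump has $|b|\sim \eta$ pointwise on its support at the relevant scale (one takes $b=\eta_B$-type data, or more simply uses that $\|b\|_{BMO_\eta}\lesssim 1$ forces $\frac{1}{|B|}\int_B|b-b_B|\lesssim \eta(B)/|B|$ and one picks $b$ with $|b|\gtrsim \eta$ on a large portion of $B$), so $\int_B|b|^{mq}\lambda^q\gtrsim \int_{B}\eta^{mq}\lambda^q$.

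The main obstacle — and the step I would spend the most care on — is the construction of the test function $b$: we need $b\in BMO_\eta$ with $\|b\|_{BMO_\eta}\lesssim 1$, supported in a fixed dilate of $B$, and satisfying $|b|^{m}\gtrsim \eta^{m}$ in an $L^q(\lambda^q)$-averaged sense over $B$ (respectively $\tilde B$). Since $\eta$ is an arbitrary weight with no doubling assumed, the analogue of Lemma~\ref{lm3.2} is delicate; the way around this is to invoke Lemma~\ref{lm3.3} locally, or rather to note that to establish the \emph{inequality} in Lemma~\ref{lm3.4} it suffices to produce, for each ball, a compactly supported $b$ with controlled $BMO_\eta$ norm and the right lower bound, which can be done by a direct Whitney-type construction on $B$ as in \cite{LerOR}. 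A second, more routine, obstacle is the bookkeeping of the exponents ($1/p-1/q=\alpha/n$) when converting the tested inequality with $L^{q,1}$/duality into the stated form; this goes exactly as in the passage from \eqref{duality} to \eqref{3.2}–\eqref{3.3} in the previous section, using $(I_\alpha^{b,m})^\ast=(-1)^m I_\alpha^{b,m}$ and the identity $\|\chi_{\tilde B}\|_{L^{q',1}(\mu)}\sim \mu(\tilde B)^{1/q'}$, so I would simply reference that computation rather than repeat it. Once Lemma~\ref{lm3.4} is in hand, Theorem~\ref{theorem1.8} will follow by combining it (and its symmetric partner) with the $A_{p,q}$ hypotheses on $\lambda,\mu$ and Lemma~\ref{lm3.3}: the displayed inequality forces $\big(\fint_{\tilde B}\eta^{mq}\lambda^q\big)^{1/q}\lesssim \big(\fint_B\mu^p\big)^{1/p}\big/\big(\fint_B \lambda^q\big)^{?}$-type bounds which, together with the reverse inequality coming from \eqref{necessity} and Lemma~\ref{lm3.3}, pin down $\eta\sim(\mu\lambda^{-1})^{1/m}$ a.e.
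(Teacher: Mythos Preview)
Your overall strategy---pick a companion ball $\tilde B$ via Lemma~\ref{lm3.1}, choose a test function $b$ with controlled $BMO_\eta$ norm, apply \eqref{boundedness}, and then replace the kernel by the constant $(Ar)^{-(n-\alpha)}$ and absorb the error---is the right one, and this is exactly what the paper does. But what you identify as ``the main obstacle'' is in fact trivial, and by missing the simple choice you end up with a genuinely incomplete argument.

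The paper simply takes $b=\eta\,\chi_{\tilde B}$. This lies in $BMO_\eta$ with norm at most $2$ for \emph{any} weight $\eta$, with no doubling and no Whitney construction: for every cube $Q$,
\[
\frac{1}{\eta(Q)}\int_Q\big|\eta\chi_{\tilde B}-(\eta\chi_{\tilde B})_Q\big|
\le \frac{2}{\eta(Q)}\int_Q \eta\chi_{\tilde B}
\le \frac{2\eta(Q)}{\eta(Q)}=2.
\]
With this $b$, for $x\in\tilde B$ and $y\in B$ one has $b(y)=0$ and $b(x)=\eta(x)$, so
\[
I_\alpha^{b,m}(f\chi_B)(x)=\eta(x)^m\int_B\frac{f(y)}{|x-y|^{n-\alpha}}\,dy,
\]
and \eqref{boundedness} together with Lemma~\ref{lm3.1} gives the stated inequality directly, with the integral of $\eta^{mq}\lambda^q$ already sitting over $\tilde B$. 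No analogue of Lemma~\ref{lm3.2}, no Whitney decomposition, no local invocation of Lemma~\ref{lm3.3}, and no ``swap the roles of $B$ and $\tilde B$'' step is needed. Your proposed route---build a compactly supported $b$ with $|b|\gtrsim\eta$ in an $L^q(\lambda^q)$-averaged sense via some extension lemma---is not wrong in spirit, but you never actually produce such a $b$, and the difficulties you raise (no doubling on $\eta$) are artifacts of not seeing the explicit choice.

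Two smaller points. First, your discussion of where to support $b$ and where to evaluate is tangled: the clean setup is $b$ supported in $\tilde B$, $f$ supported in $B$, evaluate at $x\in\tilde B$; then the $\lambda^q$-integral is automatically over $\tilde B$ as required. Second, the hypothesis \eqref{boundedness} is a strong $L^p(\mu^p)\to L^q(\lambda^q)$ bound, so no duality or Lorentz-space bookkeeping (the $L^{q',1}$ computations from Section~3) enters here; that machinery belongs only to Theorem~\ref{theorem1.7}.
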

\begin{proof}
Let $\tilde{B}$ be given in Lemma \ref{lm3.1} and $b=\eta\chi_{\tilde{B}}$. Then for $x\in\tilde{B}$,
\begin{align*}
I_{\alpha}^{b,m}(f\chi_B)(x)=\int_B(b(x)-b(y))^m\frac{f(y)}{|x-y|^{n-\alpha}}dy=\eta(x)^m
\int_B\frac{f(y)}{|x-y|^{n-\alpha}}dy.
\end{align*}
By \eqref{boundedness}, we have
\begin{align*}
\Big(\int_{\tilde{B}}\Big(\int_B\frac{f(y)}{|x-y|^{n-\alpha}}dy\Big)^q
\eta(x)^{mq}\lambda(x)^qdx\Big)^{1/q}
\lesssim\Big(\int_Bf(x)^p\mu(x)^pdx\Big)^{1/p}.
\end{align*}
From this and Lemma \ref{lm3.1}, we deduce that
\begin{align*}
&\frac{1}{A^{n-\alpha}}\Big(\int_{\tilde{B}}\eta(x)^{mq}\lambda(x)^qdx\Big)^{1/q}f_B\\
&\quad=\frac{r^{n-\alpha}}{(Ar)^{n-\alpha}}\Big[\int_{\tilde{B}}\eta(x)^{mq}\lambda(x)^q
\Big(\frac{1}{|B|}\int_Bf(y)dy\Big)^qdx\Big]^{1/q}\\
&\quad=r^{-\alpha}\Big[\int_{\tilde{B}}\eta(x)^{mq}\lambda(x)^q
\Big(\int_B\frac{f(y)}{|x_0-y_0|^{n-\alpha}}dy\Big)^qdx\Big]^{1/q}\\
&\quad\leq r^{-\alpha}\Big[\int_{\tilde{B}}\eta(x)^{mq}\lambda(x)^q
\Big(\int_B\Big|\frac{1}{|x-y|^{n-\alpha}}-\frac{1}{|x_0-y_0|^{n-\alpha}}\Big|f(y)dy\Big)^qdx\Big]
^{1/q}\\
&\qquad+r^{-\alpha}\Big[\int_{\tilde{B}}\eta(x)^{mq}\lambda(x)^q
\Big(\int_B\frac{1}{|x-y|^{n-\alpha}}f(y)dy\Big)^qdx\Big]^{1/q}\\
&\quad\lesssim\frac{\epsilon_A}{A^{n-\alpha}}\Big(\int_{\tilde{B}}\eta(x)^{mq}\lambda(x)^qdx\Big)
^{1/q}f_B+r^{-\alpha}\Big(\int_Bf(x)^p\mu(x)^pdx\Big)^{1/p}.
\end{align*}
Then the desired result directly follows by letting $A\rightarrow\infty$.
\end{proof}

\begin{lemma}\label{lm3.5}
Let $\lambda,\mu$ be arbitrary weights satisfy \eqref{boundedness} and $p,q,m,\alpha$ be given in Theorem \ref{theorem1.8}. Then
\begin{align*}
\lambda(x)\eta(x)^{m}\lesssim\mu(x).
\end{align*}
\end{lemma}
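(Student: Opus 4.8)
The plan is to deduce the pointwise bound $\lambda(x)\eta(x)^m\lesssim\mu(x)$ from the averaged estimate in Lemma \ref{lm3.4} by a Lebesgue-differentiation argument. First I would fix a Lebesgue point $x$ of the relevant locally integrable functions (namely of $\eta^{mq}\lambda^q$, of $\mu^p$, and of $\mu^{-p}$, the latter being locally integrable since $\mu\in A_{p,q}$), and consider the family of balls $B=B(y_0,r)$ shrinking to $x$, together with the companion balls $\tilde B=\tilde B(x_0,r)$ produced by Lemma \ref{lm3.1}. Note that $\tilde B$ is a translate of $B$ by a fixed vector of length $\sim Ar$, so as $r\to0$ both $B$ and $\tilde B$ collapse to the same point $x$ (with $A$ now frozen at the large value chosen in Lemma \ref{lm3.4}). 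The key point is that $|B|=|\tilde B|\sim r^n$, so the factor $r^{-\alpha}$ in Lemma \ref{lm3.4} is exactly $|B|^{-\alpha/n}$, and the relation $1/p-1/q=\alpha/n$ will make the powers of $|B|$ balance.

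Next I would choose the test function $f$ cleverly so that the averaged inequality inverts. The natural choice, as in two-weight necessity arguments, is $f=\mu^{-p'}\chi_B$ (or more precisely $f=(\mu^p)^{1-p'}\chi_B=\mu^{-p'}\chi_B$, which is the ``dual'' density), so that $\big(\int_B f^p\mu^p\big)^{1/p}=\big(\int_B\mu^{-p'}\big)^{1/p}$ and $f_B=|B|^{-1}\int_B\mu^{-p'}$. Plugging this into Lemma \ref{lm3.4} gives
\begin{align*}
\Big(\int_{\tilde B}\eta^{mq}\lambda^q\Big)^{1/q}\,|B|^{-1}\int_B\mu^{-p'}\lesssim r^{-\alpha}\Big(\int_B\mu^{-p'}\Big)^{1/p},
\end{align*}
i.e.
\begin{align*}
\Big(\frac{1}{|\tilde B|}\int_{\tilde B}\eta^{mq}\lambda^q\Big)^{1/q}\Big(\frac{1}{|B|}\int_B\mu^{-p'}\Big)^{1/p'}\lesssim |B|^{-\alpha/n-1/q+1/p}=1,
\end{align*}
using $|\tilde B|\sim|B|\sim r^n$ and $\alpha/n=1/p-1/q$. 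Letting $r\to0$ through Lebesgue points, the left-hand averages converge to $(\eta(x)^{mq}\lambda(x)^q)^{1/q}\cdot(\mu(x)^{-p'})^{1/p'}=\eta(x)^m\lambda(x)\mu(x)^{-1}$, which yields $\lambda(x)\eta(x)^m\lesssim\mu(x)$ at a.e.\ $x$.

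The main obstacle I anticipate is \emph{justifying the simultaneous passage to the limit on the two balls $B$ and $\tilde B$}, since they are not concentric and only coincide in the limit. To handle this cleanly I would either (i) note that $\tilde B\subset C_{A,n}B$ for a dilate with fixed dilation factor and $B\subset C_{A,n}\tilde B$ likewise, so both $\frac{1}{|\tilde B|}\int_{\tilde B}\eta^{mq}\lambda^q$ and $\frac{1}{|B|}\int_B\mu^{-p'}$ are comparable to averages over the common ball $C_{A,n}B(x,r)$ up to constants, and these do converge to the Lebesgue-point values; or (ii) invoke that $x$ is a Lebesgue point and that translating the ball by a vector of length $O(r)$ does not affect the limiting average — a standard fact. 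A secondary technical point is that one must know $\mu^{-p'}\in L^1_{loc}$ and $\eta^{mq}\lambda^q\in L^1_{loc}$ so that Lebesgue points are defined a.e.; the former follows from $\mu\in A_{p,q}$ (which gives $\mu^{-p'}$ locally integrable), and the latter is needed for Lemma \ref{lm3.4} to be meaningful in the first place (otherwise the inequality holds trivially). I would also remark that exactly the same scheme, run with the roles symmetric, is what will later be combined with Lemma \ref{lm3.3} and the necessity hypothesis \eqref{necessity} to force $\eta\sim(\mu\lambda^{-1})^{1/m}$ in Theorem \ref{theorem1.8}; here only the ``$\lesssim$'' half is claimed.
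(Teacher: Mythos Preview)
Your argument is correct, but it is more elaborate than the paper's. The paper simply takes $f\equiv 1$ in Lemma \ref{lm3.4}: with $f_B=1$ and $\big(\int_B f^p\mu^p\big)^{1/p}=\big(\int_B\mu^p\big)^{1/p}$, the inequality becomes
\[
\Big(\frac{1}{|\tilde B|}\int_{\tilde B}\eta^{mq}\lambda^q\Big)^{1/q}
\lesssim r^{-\alpha}|\tilde B|^{-1/q}\Big(\int_B\mu^p\Big)^{1/p}
=\Big(\frac{1}{|B|}\int_B\mu^p\Big)^{1/p},
\]
using $\alpha/n+1/q=1/p$, and then Lebesgue differentiation gives $\lambda(x)\eta(x)^m\lesssim\mu(x)$ directly. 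Your choice $f=\mu^{-p'}\chi_B$ also works, and your exponent bookkeeping is right, but it introduces an extra hypothesis ($\mu^{-p'}\in L^1_{loc}$, which you correctly note follows from $\mu\in A_{p,q}$) that the paper's choice $f=1$ avoids entirely. The dual-weight test function you picked is the natural one when proving \emph{necessity of a two-weight $A_{p,q}$-type condition}; here the goal is only a one-sided pointwise comparison, and the constant test function suffices. Your careful discussion of why the averages over $B$ and $\tilde B$ both converge to the value at the common limit point is a genuine addition of rigor over the paper, which dispatches this in a single sentence.
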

\begin{proof}
Let $f=1$ in Lemma \ref{lm3.4}. Keep in mind that $1/p-1/q=\alpha/n$, then
\begin{align*}
\Big(\frac{1}{|\tilde{B}|}\int_{\tilde{B}}\eta(x)^{mq}\lambda(x)^qdx\Big)^{1/q}\lesssim
\Big(\frac{1}{|B|}\int_{B}\mu(x)^pdx\Big)^{1/p}.
\end{align*}
By the Lebesgue differential theorem, we get the desired result.
\end{proof}

Now, we are in the position to prove Theorem \ref{theorem1.8}.

\begin{proof}[Proof of Theorem \ref{theorem1.8}]
By Lemma \ref{lm3.5}, it suffices to prove that
\begin{align}\label{3.4}
\mu\lesssim\lambda\eta^m
\end{align}
almost everywhere. Suppose that \eqref{3.4} is not true. Denote $\tilde{\eta}=(\mu/\lambda)^{1/m}$. Then $\tilde{\eta}/\eta\not\in L^\infty$. Note that when  $\lambda,\,\mu\in A_{p,q}$, Accomazzo et al. \cite{AMR} proved that for $b\in BMO_{\tilde{\eta}}$,
\begin{align*}
\|I_{\alpha}^{b,m}f\|_{L^q(\lambda^q)}\lesssim\|f\|_{L^p(\mu^p)}.
\end{align*}
This, together with Lemma \ref{lm3.3}, implies that $b\not\in BMO_\eta$, which contradicts with \eqref{necessity} and completes the proof of Theorem \ref{theorem1.8}.
\end{proof}


\end{document}